\definecolor{shadecolor}{gray}{0.875}
\definecolor{col}{RGB}{42, 95, 151}
\theoremstyle{plain}
\newtheorem{theorem}{Theorem}[section]
\newtheorem*{lemma*}{Lemma}
\newtheorem{lemma}[theorem]{Lemma}
\newtheorem*{theorem*}{Theorem}
\newtheorem{proposition}[theorem]{Proposition}
\newtheorem*{proposition*}{Proposition}
\newtheorem{corollary}[theorem]{Corollary}
\newtheorem*{corollary*}{Corollary}
\theoremstyle{definition}
\newtheorem{remark}[theorem]{Remark}
\newtheorem*{remark*}{Remark}
\newtheorem*{definition*}{Definition}
\newtheorem*{example*}{Example}
\newtheorem{question}{Question}
\newtheorem*{question*}{Question}
\def\c1{\operatorname{c_1}}
\def\c2{\operatorname{c_2}}
\def\Spec{\operatorname{Spec}}
\def\CC{{\mathbb C}}
\def\ZZ{{\mathbb Z}}
\def\QQ{{\mathbb Q}}
\def\PP{{\mathbb P}}
\def\CP{{\mathbb P}}
\def\L{{  L}}
\def\O{{\mathcal O}}
\def\AA{{\mathbb A}}
\def\+{\oplus}                   
\def\*{\otimes}
\def\Pic{\operatorname{Pic}}
\def\CH{\operatorname{CH}}
\newcommand\blfootnote[1]{%
  \begingroup
  \renewcommand\thefootnote{}\footnote{#1}%
  \addtocounter{footnote}{-1}%
  \endgroup
}
\def\L{\O_X(1)}
\def\X{\mathcal X}
\def\Kollar{Koll\'ar }
\title{On deformations of quintic and septic hypersurfaces} 
\author{John Christian Ottem and Stefan Schreieder}
\date{ }
\begin{document}

\maketitle 

\thispagestyle{empty}

\blfootnote{{\it Date}: November 6, 2019}

In this paper we will be interested in a question of Mori \cite[p.\ 642]{mori} from 1975, which in recent years has also been advertised by Koll\'ar (see e.g.\ \cite[p.\ 33]{kollar-moduli}) (see also the Mathoverflow post \cite{MO}).
To state the question, we say that a projective variety $X$ is a smooth specialization of a hypersurface of degree $d$ if there is a smooth proper morphism $\mathcal X\to \Spec A$ over a local ring $A$, with special fibre $X$, whose generic fibre is isomorphic to a hypersurface of degree $d$ in projective space.

\begin{question}\label{question}
Let $X$ be a smooth specialization of a hypersurface of prime degree in $\CP^{n+1}$ with $n\geq 3$.
Is then also $X$ a hypersurface? 
\end{question}

In dimensions 1 and 2 there are many counterexamples to this question (even in the case where $X$ is defined over $\CC$). 
There are explicit families of smooth plane curves which specialize to hyperelliptic curves (which are not embeddable in $\PP^2$) \cite{griffin}. In dimension 2, we can view cubic surfaces as blow-ups of $\PP^2$ in six general points and specialize to a non-cubic surface by moving the points into special position.   
Moreover, simultaneous resolution of ADE surface singularities \cite{artin} 
shows that the simultaneous resolution of a family of smooth surfaces in $\PP^3$ degenerating to a surface with only ADE singularities gives a counterexample for any degree $\ge 2$ (as long as we allow the total space $\mathcal X$ to be an algebraic space).   
A much more sophisticated example was found by Horikawa \cite{horikawa}, who showed that over the complex numbers, quintic surfaces specialize to smooth surfaces which themselves, as well as their canonical models, cannot be embedded into $\CP^3$.

The restriction to prime degrees in Question \ref{question} is also necessary. Mori \cite{mori} constructed the following examples, showing that for any composite degree and in arbitrary dimension $\geq 3$, hypersurfaces can deform to non-hypersurfaces: if $f$ and $g$ are generic homogeneous polynomials in $x_0,\ldots, x_n$ of degree $ab$ and $b$ respectively, the family
$$
\X=Z(y^a-f(x_0,\ldots,x_n),ty-g(x_0,\ldots,x_n))\subset \PP(1^{n+1},b)\times \AA^1_t
$$
is smooth, and $\X_t$ is a degree $ab$ hypersurface for $t\neq 0$, but $\X_0=Z(y^a-f,g)$ is not isomorphic to a hypersurface: it is a $a:1$ cover of the hypersurface $Z(g)\subset \PP^n$ and so the ample generator of the Picard group is not very ample. 
Replacing $t$ by a uniformizer of $\ZZ_p$, 
we obtain similar examples for specializations in mixed characteristic.

In light of the above examples, one could perhaps expect counterexamples to Question \ref{question}, but coming up with prime degree examples makes it more difficult.  
After all, 
 if $X$ is defined over an algebraically closed 
field of characteristic zero, the result of Brieskorn \cite{brieskorn} (see also the related results in \cite{kobayashi-ochiai} and \cite{hwang}) and Fujita \cite{fujita,fujita-book} imply a positive answer to Question \ref{question} in degrees $2$ and $3$, respectively.

In this paper we settle Mori's question for quintics in arbitrary dimension and for septics in dimension three. 

\begin{theorem} \label{thm:question1}
Let $X$ be a smooth projective variety of dimension $n\geq 3$ over a field $k$.
Assume that $X$ is the smooth specialization of a hypersurface of degree $d$, and that one of the following holds:
\begin{enumerate}[(a)]
\item $(n,d)=(3,7)$; 
\label{item:cor:question1:1}
\item $(n,d)=(3,5)$; \label{item:cor:question1:0}
\item  $n\geq 4$, $d=5$ and $\operatorname{char} k=0$. \label{item:cor:question1:2}
\end{enumerate} 
Then $X$ is isomorphic to a hypersurface of degree $d$ in $\CP_{k}^{n+1}$. 
\end{theorem}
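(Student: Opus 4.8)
The plan is to promote the family to a polarised one and then reconstruct $X$ from its section ring. We may assume $A$ is a discrete valuation ring with uniformiser $t$, residue field $k$ and fraction field $K$, so that $Y=\X_K$ is a smooth degree-$d$ hypersurface in $\CP^{n+1}_K$ and $X=\X_k$. Every geometric fibre is a smooth hypersurface of dimension $n\ge 3$, hence satisfies $H^1(\O)=H^2(\O)=0$; thus the relative Picard functor of $\pi\colon\X\to\Spec A$ is unramified and unobstructed, and the ample generator $\O_Y(1)$ of $\Pic(Y)=\ZZ$ (Grothendieck--Lefschetz, valid in all characteristics for $n\ge 3$) extends to $\mathcal L\in\Pic(\X)$ with $L:=\mathcal L|_X$ generating $\Pic(X)=\ZZ$. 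Since intersection numbers are locally constant, $L^n=d>0$ and $L$ is ample; and as $\omega_{\X/A}|_Y=\O_Y(d-n-2)=\mathcal L^{\,d-n-2}|_Y$ while restriction to $Y$ is injective on $\Pic(\X)$, we get $\omega_{\X/A}=\mathcal L^{\,d-n-2}$ and hence $K_X=(d-n-2)L$. It therefore suffices to prove that the section ring $R=\bigoplus_{m\ge0}H^0(X,mL)$ is generated in degree one with $\dim_k R_1=n+2$: then $R$ is a quotient of $k[x_0,\dots,x_{n+1}]$, so $X=\Proj R\hookrightarrow\CP^{n+1}_k$ is an integral subvariety of codimension one, i.e.\ a hypersurface, necessarily of degree $L^n=d$.

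The next step reduces this to a cohomology-vanishing statement. Over the discrete valuation ring $A$, the sequence $0\to\O_{\X}\xrightarrow{\,t\,}\O_{\X}\to\O_X\to0$ twisted by $\mathcal L^{m}$ yields, after applying $\pi_*$, the exact sequence
$$0\to(\pi_*\mathcal L^{m})\otimes_A k\to H^0(X,mL)\to (R^1\pi_*\mathcal L^{m})[t]\to 0 .$$
Since $R^1\pi_*\mathcal L^{m}$ has generic rank $h^1(Y,\O_Y(m))=0$ it is a torsion $A$-module, so $h^0(X,mL)=h^0(Y,\O_Y(m))$ precisely when $H^1(X,mL)=0$. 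The same mechanism, together with relative duality, gives $H^1(X,\O_X)=H^2(X,\O_X)=0$ for free. Thus everything follows once we establish
$$H^i(X,mL)=0\qquad(0<i<n,\ \text{all }m),$$
for then all $h^0(X,mL)$ equal the hypersurface values $\binom{m+n+1}{n+1}-\binom{m+n+1-d}{n+1}$, the vanishing of $H^1$ of twists yields generation in degree one by a Castelnuovo--Mumford argument, and Gorenstein duality pins the ideal of relations to a single form of degree $d$. For $n=3$, Serre duality turns this into the single requirement $H^1(X,jL)=0$ for all $j\in\ZZ$, via $H^2(X,mL)\cong H^1(X,(d-5-m)L)^{\vee}$.

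In characteristic zero most of this is standard vanishing. In case (c) we have $n\ge4$, so $-K_X=(n-3)L$ is ample, $X$ is Fano, and Kodaira--Akizuki--Nakano gives $H^i(X,mL)=H^i(X,K_X+(m+n-3)L)=0$ for all $i>0$ and $m\ge1$; this finishes (c) at once. For the threefolds in characteristic zero, the quintic case $K_X=0$ is likewise settled for every $j$ by Kodaira vanishing together with $H^1(X,\O_X)=0$. The septic case $K_X=2L$ is handled by Kodaira for all $j\neq1$ (and $H^1(X,2L)\cong H^2(X,\O_X)^{\vee}=0$), leaving exactly the borderline $H^1(X,L)=0$.

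The main obstacle is precisely this borderline vanishing, together with its analogue in positive and mixed characteristic: $H^1(X,L)=0$ for the septic threefold in every characteristic, and $H^1(X,jL)=0$ for all $j$ for both threefolds when $\operatorname{char}k>0$. Here Kodaira vanishing is either inapplicable (at the sub-canonical twist) or simply false (in characteristic $p$); upper semicontinuity of cohomology bounds the special fibre from the wrong side; and lifting $(X,L)$ to characteristic zero does not help, since the reduction of the defining equation may acquire singularities, exhibiting $X$ only as a resolution. I would attack it through the equivalent statement $h^0(X,L)=n+2$. For the septic, where $h^0(X,2L)$ is pinned to the hypersurface value $\binom{n+3}{2}=\dim_k\Sym^2 k^{\,n+2}$ (as $2L=K_X$ gives $H^1(X,2L)=H^2(X,2L)=0$ and $H^3(X,2L)\cong H^0(X,\O_X)^{\vee}=k$), assuming $h^0(X,L)=n+2+e$ with $e\ge1$ forces a nonzero kernel of $\Sym^2 H^0(X,L)\to H^0(X,2L)$, so the image of $\varphi_{|L|}$ --- a nondegenerate $n$-fold of degree $d$ in $\CP^{n+1+e}$ --- lies on a quadric, and a degree-parity analysis on that quadric should give a contradiction. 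For the quintic threefold, where no low twist is pinned for free, relative duality recasts the vanishings as torsion-freeness statements for higher direct images $R^{\bullet}\pi_*\mathcal L^{m}$, which I would approach by a depth argument on the regular total space $\X$, descending from $m\gg0$. This characteristic-free control of $H^1$ at the low twists is the crux, and I expect it to be the hardest and most delicate part of the argument.
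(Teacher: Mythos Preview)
Your approach diverges sharply from the paper's and has a genuine gap at the step you label ``Castelnuovo--Mumford.'' Even granting all the vanishing $H^i(X,mL)=0$ for $0<i<n$ and all $m$ (so that every $h^0(X,mL)$ matches the hypersurface value), this does \emph{not} imply that $R$ is generated in degree one. Castelnuovo--Mumford regularity, in any of its incarnations, presupposes an ample \emph{globally generated} line bundle against which to measure regularity; invoking it to prove that $|L|$ itself is globally generated is circular. Concretely, knowing $\dim R_2=\binom{n+3}{2}=\dim\Sym^2 R_1$ only tells you that $\Sym^2 R_1\to R_2$ is surjective iff it is injective, i.e.\ iff the image of $\phi_{|L|}$ lies on no quadric --- and nothing cohomological rules out, for instance, $\phi$ mapping $X$ two-to-one onto a quadric threefold with a single base point. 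This is exactly the configuration realised by Reid's (singular) specialization of quintic threefolds, and it is where the real work lies. The paper states explicitly that proving base-point-freeness of $|L|$ ``directly from general theorems using cohomological techniques or vanishing theorems does not seem to be possible.'' So even case (c), where Kodaira does supply the vanishing, is not finished by your argument.

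The paper instead reduces (via inflation of local rings and specialisation of Picard groups) to an algebraically closed field with only the numerical data $L^n=d$, $h^0(X,L)\ge n+2$, $K_X=(d-n-2)L$, and $\Pic X/{\sim_{\mathrm{num}}}=\ZZ[L]$ --- note that only the \emph{inequality} on $h^0$ is used, and no vanishing for any other twist is needed. It then resolves $\phi\colon X\dashrightarrow Y\subset\PP^{n+1}$ on a smooth $W$, decomposes $p^*L=M+F$ into moving and fixed parts, and runs an exhaustive intersection-theoretic case analysis on $(\dim Y,\deg Y,\dim B)$, using the Hodge index theorem, the classification of varieties of minimal degree, and the pivotal observation that no hyperplane section of $Y$ can split as a sum of two movable divisors (since $\Pic X$ has rank one). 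For quintic $n$-folds with $n\ge5$ it invokes H\"oring--Voisin together with a discrepancy computation to produce a smooth member of $|L|$ and then inducts via Mori's hyperplane-section theorem; the delicate cases are $n=3$ and $n=4$. Your sketched attack on the septic's $H^1(X,L)$ --- pin $h^0(2L)$, force a quadric relation if $h^0(L)>5$, then analyse that quadric --- is the opening move of precisely this geometric programme, but carrying it to completion is the substance of the paper, not a by-product of vanishing.
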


The examples in items (\ref{item:cor:question1:1}), (\ref{item:cor:question1:0}) and (\ref{item:cor:question1:2}) are of general type, Calabi--Yau and Fano, respectively.  
Note also that the above theorem classifies arbitrary smooth specializations of quintic or septic threefolds over local rings of mixed characteristic.

 Item (\ref{item:cor:question1:2}) 
 in Theorem \ref{thm:question1} will be deduced from the following numerical characterization of smooth quintic hypersurfaces in dimension at least three. 
 
\begin{theorem}\label{quintictheorem}
Let $X$ be a smooth projective variety of dimension $n\geq 3$, defined over an algebraically closed field $k$ of characteristic zero.
Suppose that $\Pic X/\sim_{num}=\ZZ [\L]$ for an ample line bundle $\L$ with 
$$
\L^n=5,\ \ \chi(X,\L)\geq n+2\ \ \text{and}\ \ K_X=\mathcal O_X(3-n).
$$
Then $X$ is isomorphic to a quintic hypersurface in $\CP^{n+1}_k$.
\end{theorem}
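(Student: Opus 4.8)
The plan is to show that $\L$ is very ample with $h^0(X,\L)=n+2$, so that the complete linear system $|\L|$ embeds $X$ as an $n$-dimensional, degree $\L^n=5$, codimension-one subvariety of $\CP^{n+1}_k$, which is then necessarily a quintic hypersurface.

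First I would extract the cohomological consequences of the hypotheses. Since $\L-K_X=\O_X(n-2)$ and, more generally, $\O_X(m)-K_X=\O_X(m+n-3)$ are ample for all $m\geq 1$ (here $n\geq 3$), Kodaira vanishing (valid as $\operatorname{char}k=0$) gives $H^i(X,\O_X(m))=0$ for all $i>0$ and $m\geq 1$. In particular $h^0(X,\L)=\chi(X,\L)\geq n+2$, so $|\L|$ is nonempty and maps $X$ to some $\PP^N_k$ with $N=h^0(X,\L)-1\geq n+1$. Because $\L$ is ample, once base-point freeness is known this map is a finite morphism $\phi\colon X\to W\subseteq\PP^N$ onto a nondegenerate $n$-dimensional $W$, and $\L^n=\deg\phi\cdot\deg W=5$. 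As $5$ is prime and $\deg W=1$ would force $W=\PP^n$ and $N=n<n+1$, we must have $\deg\phi=1$ and $\deg W=5$; thus $\phi$ is the normalization of a nondegenerate degree-$5$ variety, whose codimension is at most $4$, i.e. $n+2\leq h^0(X,\L)\leq n+5$.

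The main engine is then an induction on $n$ via general hyperplane sections, in the style of adjunction theory. For a general $Y\in|\L|$ (smooth by Bertini once $|\L|$ is base-point free), adjunction gives $K_Y=(K_X+\L)|_Y=\O_Y\big(3-(n-1)\big)$ and $(\L|_Y)^{n-1}=\L^n=5$; the Lefschetz hyperplane theorem yields $\Pic Y/\sim_{num}=\ZZ[\L|_Y]$ once $n\geq 4$; and the sequence $0\to\O_X\to\L\to\L|_Y\to 0$ together with $H^i(X,\O_X)=0$ for $i>0$ (Kodaira, using that $-K_X$ is ample for $n\geq 4$, so $\chi(\O_X)=1$) gives $h^0(Y,\L|_Y)=h^0(X,\L)-1$ and $\chi(Y,\L|_Y)=\chi(X,\L)-1\geq(n-1)+2$. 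Thus $(Y,\L|_Y)$ satisfies the hypotheses of the theorem with $n$ replaced by $n-1$. Iterating reduces everything to the base case of surfaces, where the statement---that a minimal surface of general type with $K_Y^2=5$, $p_g=4$ and $q=0$ whose canonical system is very ample is a quintic surface in $\CP^3$---is classical. Lifting very ampleness and the value $h^0(X,\L)=n+2$ back up the induction (using $H^1(X,\O_X)=0$ so that sections and separation of points and tangent vectors propagate to $X$) then identifies $X$ with a quintic hypersurface.

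I expect the genuine obstacles to be twofold. First, establishing that $\L$ itself is base-point free and very ample: the natural adjoint presentation $\L=K_X+\O_X(n-2)$ has too small a multiple for Fujita- or Reider-type freeness theorems to apply directly, so very ampleness must instead be bootstrapped from the lower-dimensional sections, and one must also rule out the a priori possibility that $W=\phi(X)$ has codimension $2,3$ or $4$ (a variety of small degree), using $\Pic X/\sim_{num}=\ZZ[\L]$ and the prescribed $K_X$ to eliminate scrolls, minimal-degree and del Pezzo-type varieties. Second, the passage from $n=3$ to $n=2$ is delicate: for the surface section of a Calabi--Yau threefold Kodaira vanishing degenerates ($\L|_Y-K_Y=\O_Y$ is no longer ample) and the Lefschetz theorem no longer controls $\Pic Y$, so the threefold case and its reduction to the classical surface classification require a separate, more careful treatment.
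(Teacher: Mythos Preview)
Your plan has a genuine gap at exactly the point where all the work lies. You write ``smooth by Bertini once $|\L|$ is base-point free'' and propose to bootstrap freeness from lower-dimensional sections, but this is circular: to cut down to a smooth section satisfying the same hypotheses you already need a smooth member of $|\L|$, and the paper explicitly notes that base-point freeness cannot be obtained from vanishing or cohomological arguments (Reid's singular Calabi--Yau threefold with the same numerics and an isolated base point shows how close failure comes). Worse, your proposed base case $n=2$ is \emph{false}: Horikawa produced smooth minimal surfaces with $K_Y^2=5$, $p_g=4$, $q=0$ for which $|K_Y|$ has a base point and the canonical map is $2:1$ onto a quadric---these are not quintic surfaces. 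So the threefold case cannot be reduced to surfaces at all.

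The paper therefore takes $n=3$ and $n=4$ as the base cases and proves them directly by a case analysis of the rational map $\phi\colon X\dashrightarrow Y\subset\PP^{n+1}$: after resolving $\phi$ one writes $p^*\L=M+F$, bounds $\dim Y$, $\deg Y$, $\dim B$ and $M^n$ via intersection numbers and the Hodge index theorem, and eliminates each possibility using the key observation that no hyperplane section of $Y$ can be numerically a sum of two movable divisors (pulling back would contradict $\Pic X/\sim_{num}=\ZZ[\L]$). Only for $n\ge 5$ is there an induction, and there smoothness of a general $D\in|\L|$ comes from the H\"oring--Voisin theorem on Fano varieties of index $n-3$, not from any bootstrap from below.
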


Items (\ref{item:cor:question1:1}) and (\ref{item:cor:question1:0}) in Theorem \ref{thm:question1} will be deduced from numerical characterizations of quintic and septic threefolds in arbitrary characteristic that are similar to Theorem \ref{quintictheorem} above, see Theorems \ref{thm:n=3,d=5} and \ref{thm:n=3,d=7} below.

Theorem \ref{quintictheorem} has the following consequence.

\begin{corollary}\label{corprojdef}
Let  $X$ be a smooth complex projective variety of dimension $n\geq 3$ which deforms via a sequence of K\"ahler deformations 
to a quintic hypersurface in $\CP^{n+1}_\CC$.
Then $X$ is isomorphic to a quintic hypersurface in $\CP^{n+1}_\CC$.
\end{corollary}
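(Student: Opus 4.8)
The plan is to reduce everything to checking the numerical hypotheses of Theorem~\ref{quintictheorem} and then invoking it. Fix a quintic hypersurface $Q\subset\CP^{n+1}_\CC$ to which $X$ is connected by the given chain of deformations. Each link is a smooth proper family of compact K\"ahler manifolds, so by Ehresmann's theorem its fibres are diffeomorphic and the integral cohomology of the fibres forms a local system, for which the Chern classes of the relative tangent bundle are flat sections. Composing the parallel transports along the chain (they agree on the shared fibres, where both relative tangent bundles restrict to the same tangent bundle) produces a ring isomorphism $\psi\colon H^*(X,\ZZ)\xrightarrow{\ \sim\ }H^*(Q,\ZZ)$ that preserves the fundamental class, i.e.\ $\int_X\alpha=\int_Q\psi(\alpha)$ on top classes, and satisfies $\psi(c_i(X))=c_i(Q)$ for all $i$. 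The rest of the argument transports three numerical quantities from $Q$ to $X$ through $\psi$.

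First I would determine the Picard group. Since $\psi$ is an isomorphism, $b_2(X)=b_2(Q)=1$, the latter because $H^2(Q,\ZZ)=\ZZ\cdot h_Q$ with $h_Q$ the hyperplane class (Lefschetz). As $X$ is projective, hence K\"ahler, the Hodge decomposition gives $1=b_2(X)=2h^{2,0}(X)+h^{1,1}(X)$ with $h^{1,1}(X)\ge 1$; therefore $h^{2,0}(X)=0$ and $h^{1,1}(X)=1$. By the Lefschetz $(1,1)$-theorem every class in $H^2(X,\ZZ)/\mathrm{tors}\cong\ZZ$ is then algebraic, so $\Pic X/\sim_{num}\,=\ZZ$, and its positive generator is represented by an ample line bundle $\L$; set $h_X:=c_1(\L)$, a generator of $H^2(X,\ZZ)/\mathrm{tors}$.

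Now I would compute the three invariants. Since $\psi$ carries the generator $h_X$ to $\pm h_Q$, we get $\L^n=\int_X h_X^n=\pm 5$, and $\L$ ample forces $\L^n=5$; in particular $\int_Q\psi(h_X)^n=5>0$, so $\psi(h_X)=h_Q$. For the canonical class, write $K_X\equiv m\,\L$ numerically (possible since $\rho(X)=1$); transporting the intersection number gives $5m=\int_X c_1(K_X)\,h_X^{n-1}=\int_Q c_1(K_Q)\,h_Q^{n-1}=5(3-n)$ by adjunction on $Q$, hence $K_X=\O_X(3-n)$. Finally, Hirzebruch--Riemann--Roch expresses $\chi(X,\L)$ as a universal polynomial in the intersection numbers of the $c_i(X)$ and $h_X$, all of which $\psi$ preserves, so $\chi(X,\L)=\chi(Q,\O_Q(1))$; the ideal-sheaf sequence $0\to\O_{\CP^{n+1}}(-4)\to\O_{\CP^{n+1}}(1)\to\O_Q(1)\to 0$ together with the vanishing of $H^{\ge 1}(\CP^{n+1},\O(1))$ and of all of $H^*(\CP^{n+1},\O(-4))$ for $n\ge 3$ yields $\chi(Q,\O_Q(1))=n+2$. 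Thus $\L^n=5$, $\chi(X,\L)=n+2\ge n+2$ and $K_X=\O_X(3-n)$, and Theorem~\ref{quintictheorem} applies, giving that $X$ is a quintic hypersurface in $\CP^{n+1}_\CC$.

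The main obstacle is not a single hard estimate but the careful bookkeeping of deformation invariance. Two points deserve attention: justifying that Ehresmann's theorem and the flatness of the Chern classes of the relative tangent bundle apply to K\"ahler (not necessarily projective) total spaces, so that $\psi$ exists with the stated ring- and Chern-class-preserving properties; and controlling the sign of the rank-one generator, which I resolve by observing that $\psi(h_X)$ has positive top self-intersection and is therefore the positive (hyperplane) generator $h_Q$. The reduction to $\rho(X)=1$ with $NS(X)/\mathrm{tors}=H^2(X,\ZZ)/\mathrm{tors}$ is exactly what forces $\L^n$ to equal $5$ on the nose rather than $5k^n$ for some index $k$, and this is where the K\"ahler hypothesis enters, via $h^{2,0}(X)=0$.
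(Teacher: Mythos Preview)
Your proof is correct and follows essentially the same approach as the paper: both verify the numerical hypotheses of Theorem~\ref{quintictheorem} using deformation invariance of topological data, then invoke that theorem. The only difference is in packaging---the paper first observes that $b_2=1$ forces every fibre in the chain to be projective and then argues that the ample generators, the canonical bundle, and the Euler characteristic deform along the family, whereas you work directly with the Ehresmann cohomology isomorphism and Hirzebruch--Riemann--Roch; both routes establish the same three equalities $\L^n=5$, $K_X=\O_X(3-n)$, $\chi(X,\L)=n+2$.
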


The above corollary  
extends the famous work of Horikawa \cite{horikawa} on deformations of quintic surfaces.
In fact, Horikawa  
showed that any minimal complex algebraic 
surface $X$ with invariants $p_g=4$, $q=0$ and $K_X^2=5$ is deformation equivalent to a quintic hypersurface. 
Moreover, there are two cases for such a surface: either $K_{X}$ is base point free, in which case it gives a morphism onto the canonical model of $X$ which is a quintic surface; 
or $K_X$ has a unique base point, and the rational map $\phi_{|K_X|}:X\dashrightarrow F$ is generically a double cover of a Hirzebruch surface. 
Griffin \cite{griffin} extended this analysis by writing down explicit equations for a smooth family of quintic surfaces specializing to a surface of the latter kind (we review this construction in Section \ref{sec:reid}).

It is not hard to see that Question \ref{question} has a positive answer if the specialization  $\mathcal O_X(1)$  of the hyperplane bundle 
is base point free, see Lemma \ref{lem:H^n=d} below. 
However, proving this directly from general theorems using cohomological techniques or vanishing theorems does not seem to be possible. 
For instance, Reid \cite{reid} constructed an example of a specialization $X$ of quintic threefolds such that  
$\mathcal O_X(1)$ 
has an isolated base point (hence $X$ is not a hypersurface), but where $X$ has terminal singularities (two ordinary double points), see Section \ref{sec:reid} below. 

The analogue of Question \ref{question} for hypersurfaces in abelian varieties is known to have a positive answer, see \cite{CL}.
Note however that this case is much simpler, as one has the Albanese morphism to start with.
In particular, the main difficulties that we face in this paper, namely the possibility of base points of $\mathcal O_X(1)$, disappear.

While we believe that Question \ref{question} is interesting in its own right, understanding specializations of hypersurfaces is known to be an important tool in the study of the geometry of hypersurfaces themselves. 
For instance, Koll\'ar \cite{K-JAMS} and Totaro \cite{totaro-JAMS} used the aforementioned deformations of Mori (in mixed characteristic) to show that many hypersurfaces of low degree are not ruled, respectively not stably rational.
Totaro's result has recently been improved in \cite{Sch1}, where different degenerations are used.

\bigskip

\noindent {\bf Acknowledgements.}  
We would like to thank J.\ \Kollar and B.\ Totaro, for bringing this problem to our attention. We would also like to thank B.\ Totaro for comments and O.\ Benoist, R.\ Laza, J. V.\ Rennemo, and C.\ Xu for useful discussions.

\section{Basic set-up and some general results} 

\subsection{Notation} \label{subsec:notation} 

Let $k$ be a field, which until Section \ref{sec:question1} will be assumed to be algebraically closed.
We denote by $X$ a smooth  projective variety of dimension $n$ over $k$, whose Picard group is up to numerical equivalence generated by an ample line bundle $\O_X(1)$ with 
$$
\L^n=d,\ \ h^0(X,\L)\geq n+2\ \ \text{and}\ \ K_X=\mathcal O_X(d-n-2)
$$
for some integer $d\geq 1$.
If $\operatorname{char} k=0$, $n\geq 3$ and $d=5$, then Kodaira vanishing shows $\chi(X,\L)=h^0(X,\L)$ and so the above assumptions are exactly those from Theorem \ref{quintictheorem}.

Let us fix some notation, which will be retained for the rest of the paper. 
We will consider the linear system $|\O_X(1)|$ with base locus $\mbox{Bs}(|\O_X(1)|)\subset X$.  
We denote by $B$ the cycle of top-dimensional components of $\mbox{Bs}(|\O_X(1)|)$, counted with multiplicities. 
We further denote the dimension of $B$ by $b=\dim B$.

From now on we assume that 
we can resolve the rational map $\phi$ given by $n+2$ general sections of $\L$, and get a diagram
\begin{equation} \label{eq:diagr}
\begin{tikzcd}
 & W \arrow[ld, "p"'] \arrow[rd, "q"] &  \\
X \arrow[rr, "\phi", dashed] &  & Y \subset \PP^{n+1}
\end{tikzcd}
\end{equation}
where $W$ is smooth, $p$ is an isomorphism away from the base locus of $\L$ and $Y=\overline{\phi(X)}$ is integral. 
By Hironaka's theorem, this assumption is satisfied if $k$ has characterstic zero.
By \cite[p.\ 1839]{CP}, it is also satisfied over arbitrary algebraically closed fields $k$ if $n=3$. 

We will use the following notation for divisors: we will let $L=p^*\O_X(1)$; by the construction of $W$, we can decompose this as 
$$
L=M+F,
$$ 
where $M=q^\ast \mathcal O_Y(1)$ is the movable part of $L$ and $F$ is the fixed part.
It follows from Lemma \ref{lem:B,degY} below that $F$ is supported on the $p$-exceptional locus and so  
$p_*M=\L$.

\subsection{Some useful lemmas}
We will repeatedly make use of the following lemmas.

\begin{lemma}\label{lem:B,degY}
In the notation of Section \ref{subsec:notation},  $\dim B\leq n-2$ and $\deg Y\geq n+2-\dim Y$.
\end{lemma}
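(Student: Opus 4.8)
The statement splits into two essentially independent claims, which I would treat separately.

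\emph{The bound $\dim B \le n-2$.} This is equivalent to saying that the complete linear system $|\L|$ has no fixed divisorial component, so I would argue by contradiction: suppose some prime divisor $D$ is contained in $\mathrm{Bs}(|\L|)$. Then every section of $\L$ vanishes along $D$, so multiplication by the canonical section of $\O_X(D)$ identifies $H^0(X,\L)$ with $H^0(X,\L(-D))$, giving $h^0(X,\L(-D))=h^0(X,\L)\ge n+2$. Since $\Pic X/\!\sim_{num}=\ZZ[\L]$ and $D$ is a nonzero effective divisor, we have $D\equiv a\L$ for an integer $a\ge 1$, the positivity coming from $D\cdot\L^{n-1}>0$. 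Hence $\L(-D)\equiv (1-a)\L$. If $a\ge 2$, this is a negative multiple of the ample class $\L$ and so has no nonzero sections (an effective divisor cannot have negative intersection with $\L^{n-1}$); if $a=1$, the class is numerically trivial and any section would have numerically trivial, hence zero, vanishing divisor, so $h^0\le 1$. In either case this contradicts $h^0(X,\L(-D))\ge n+2$. Therefore no such $D$ exists, the base locus has codimension at least two, and $\dim B\le n-2$.

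\emph{The bound $\deg Y\ge n+2-\dim Y$.} First I would observe that $Y$ is nondegenerate in $\PP^{n+1}$: the map $\phi$ is defined by $n+2$ general sections of $\L$, which we may take to be linearly independent in $H^0(X,\L)$ (possible as $h^0(X,\L)\ge n+2$), and a hyperplane containing $Y$ would force a linear relation among them. Then I would invoke the classical lower bound on the degree of a nondegenerate integral subvariety: for any integral nondegenerate $Y\subset \PP^N$ one has $\deg Y\ge \codim Y+1=N-\dim Y+1$. Taking $N=n+1$ yields exactly $\deg Y\ge n+2-\dim Y$.

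The minimal-degree bound invoked in the second part is proved by induction on $\dim Y$ via general hyperplane sections, which preserve nondegeneracy, irreducibility and degree, the base case being that a nondegenerate integral curve in $\PP^N$ has degree at least $N$ because a general hyperplane meets it in points spanning $\PP^{N-1}$; I would simply cite it. The only real obstacle lies in the first part, namely making the numerical-versus-linear-equivalence bookkeeping precise, i.e.\ justifying that an effective line bundle numerically equivalent to a non-positive multiple of the ample class $\L$ has at most a one-dimensional space of global sections. Once this is in place the first part is immediate, and since the second part reduces to a citation after nondegeneracy is established, I expect the fixed-part analysis to carry the real content of the lemma.
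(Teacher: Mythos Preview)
Your proof is correct and follows essentially the same approach as the paper. The paper's argument for the first claim is the terse one-liner ``since $\Pic X/\!\sim_{num}=\ZZ[\L]$ and $h^0(X,\L)\ge 2$, the base locus cannot contain divisorial components,'' and you have simply unpacked this by writing out the contradiction with $h^0(\L(-D))$; for the second claim the paper likewise observes nondegeneracy and cites the minimal-degree bound (referring to Eisenbud--Harris), exactly as you do.
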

\begin{proof}
Since $\Pic X$ is up to numerical equivalence generated by $\L$, and $h^0(X,\O_X(1))\ge 2$, the base locus of $\L$ cannot contain any divisorial components and so $\dim B\leq n-2$.
Moreover, $\deg Y\geq n+2-\dim Y$ because $Y$ is integral and not contained in any hyperplane of $\CP^{n+1}$ \cite{EH}.
\end{proof}

\begin{lemma}\label{lem:L^bM^n-b-1F}
In the notation of Section \ref{subsec:notation}, 
$L^i M^{n-i-1}F\geq 0$ for all $i$.
Moreover, if $\L$ is not base point free and $b=\dim B$, then $L^b M^{n-b-1}F>0$.
\end{lemma}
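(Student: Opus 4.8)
The plan is to prove the two assertions separately: the inequality $L^iM^{n-i-1}F\ge 0$ is a routine consequence of nefness, while the strict inequality requires localizing the intersection number along a top-dimensional component of the base locus and using that $W$ resolves the graph of $\phi$.

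For the first part, note that $L=p^\ast\O_X(1)$ and $M=q^\ast\O_Y(1)$ are nef, being pullbacks of ample classes, and that $F$ is an effective divisor. Writing $F=\sum_j a_j E_j$ with $a_j>0$ and $E_j$ prime, it suffices to check $L^iM^{n-i-1}E_j\ge 0$ for each $j$. Since restrictions of nef classes are nef, $L^iM^{n-i-1}E_j=(L|_{E_j})^i(M|_{E_j})^{n-i-1}$ is a product of $n-1=\dim E_j$ nef classes on $E_j$, hence $\ge 0$; summing over $j$ gives the claim. (Equivalently, setting $f(i)=L^iM^{n-i}$, the first part says the sequence $f(0)\le f(1)\le\cdots\le f(n)$ is non-decreasing, with increments $f(i+1)-f(i)=L^iM^{n-i-1}F$; the second part asserts a strict jump at $i=b$.)

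For the strict inequality I would isolate the contribution of one top-dimensional base component. Since $\O_X(1)$ is not base point free, there is a component $B_1$ of $B$ with $\dim B_1=b\le n-2$. As $B_1$ lies in the base locus of the $n+2$ general sections defining $\phi$, every such section vanishes along $B_1$, so there is a $p$-exceptional prime divisor $E$ with $p(E)=B_1$ occurring in $F$ with strictly positive coefficient (the movable part $M$ contributes nothing to the fixed divisor). By the first part every term in $L^bM^{n-b-1}F=\sum_j a_j\,L^bM^{n-b-1}E_j$ is non-negative, so it is enough to prove $L^bM^{n-b-1}E>0$. To evaluate this, let $g=p|_E\colon E\to B_1$ with general fibre $Z$ of dimension $n-1-b$. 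Since $L=p^\ast\O_X(1)$, we have $L^b|_E=g^\ast(\O_X(1)^b|_{B_1})$, and because $\O_X(1)$ is ample with $\dim B_1=b$ the degree $\delta=\O_X(1)^b\cdot B_1$ is positive; hence $L^b\cdot E=\delta\,[Z]$ and
$$
L^bM^{n-b-1}E=\delta\,(M|_Z)^{n-1-b}.
$$
So everything reduces to showing $(M|_Z)^{n-1-b}>0$, i.e.\ that $M|_Z$ is big, equivalently that $q$ does not contract $Z$.

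This non-contraction is the main obstacle, and it is where the graph structure of $W$ is essential: a priori $M|_Z$ is only nef, and nef divisors can have vanishing top self-intersection, so one genuinely must rule out that the exceptional directions over $B_1$ all map to a lower-dimensional locus in $Y$. The plan is to use that $W$ resolves $\phi$, so $(p,q)$ maps $W$ birationally onto the graph closure $\Gamma\subset X\times\PP^{n+1}$; on $\Gamma$ the second projection is injective on the fibres of the first, hence $q$ is generically injective on the fibres of $p$. For general $x\in B_1$ the resolution $W\to\Gamma$ is an isomorphism near $Z$, so $q|_Z$ is generically finite onto its image $q(Z)\subset\PP^{n+1}$, a subvariety of dimension $n-1-b$; therefore $(M|_Z)^{n-1-b}=\deg q(Z)>0$ (as $\O_Y(1)$ is very ample), giving $L^bM^{n-b-1}E=\delta\deg q(Z)>0$ and hence $L^bM^{n-b-1}F>0$. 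The one point requiring care is that $W$ not introduce fresh contracted behaviour over the general fibre of $E\to B_1$; this is guaranteed by choosing $W$ to dominate $\Gamma$ and restricting to a general point of $B_1$, whose fibre avoids the locus modified by $W\to\Gamma$.
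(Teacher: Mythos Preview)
Your proof of the first assertion is correct and matches the paper's: $L$ and $M$ are nef and $F$ is effective.

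For the strict inequality you take a genuinely different route from the paper. The paper's argument is a one-line contradiction: general elements of $|M|$ are proper transforms of general elements of $|\O_X(1)|$; since $F$ dominates the base locus, $\O_X(1)$ is ample, and $L=p^*\O_X(1)$, the vanishing $L^bM^{n-b-1}F=0$ would force the intersection of $n-b-1$ general elements of $|\O_X(1)|$ on $X$ not to contain the base locus, which is absurd. You instead isolate one component $E$ of $F$ with $p(E)=B_1$, reduce $L^bM^{n-b-1}E$ to $\delta\,(M|_Z)^{n-1-b}$ on a general fibre $Z$ of $E\to B_1$, and then invoke the graph closure $\Gamma\subset X\times\PP^{n+1}$ (on which the second projection is injective on fibres of the first) to show that $q|_Z$ is generically finite.

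You correctly flag the delicate point, but your fix in the last sentence does not work. The claim that for general $x\in B_1$ the fibre $Z$ ``avoids the locus modified by $W\to\Gamma$'' is unjustified: the $\pi$-exceptional locus (for $\pi\colon W\to\Gamma$) may well dominate $B_1$. Worse, if the $E$ you picked is itself $\pi$-exceptional then $\dim\pi(E)\le n-2$, so $\dim q(Z)=\dim q_\Gamma(\pi(Z))\le n-2-b$ and $(M|_Z)^{n-1-b}=0$; thus an arbitrary component of $F$ dominating $B_1$ need not give anything. The repair is in the choice of $E$, not of $x$: on $\Gamma$ the effective Cartier divisor $F_\Gamma:=p_\Gamma^*\O_X(1)-q_\Gamma^*\O_{\PP^{n+1}}(1)$ satisfies $\Supp(F_\Gamma)=p_\Gamma^{-1}(\mbox{Bs}(|\O_X(1)|))$, so some component $E_\Gamma$ dominates $B_1$; take $E$ to be its strict transform in $W$. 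Then $\pi|_E\colon E\to E_\Gamma$ is proper dominant, hence surjective, so the general fibre $Z$ of $E\to B_1$ maps onto the general fibre $Z_\Gamma$ of $E_\Gamma\to B_1$, on which $q_\Gamma$ is injective. Thus $\dim q(Z)=\dim q_\Gamma(Z_\Gamma)=n-1-b$ regardless of whether $Z$ meets the $\pi$-exceptional locus, and your computation concludes.
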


\begin{proof}
Since $\mathcal O_X(1)$ is ample, some multiple of $L$ is base point free.
Since $M$ is base point free as well and $F$ is effective, $L^iM^{n-i-1}F\geq 0$ for all $i$.

Let us now assume that $\L$ is not base point free and let $b=\dim B$.
Since $\phi$ is given by $n+2$ general sections of $\L$, the proper transform of a general element of $|\L|$ is given by a general element of $|M|$.
Using that $F$ dominates the base locus of $\L$, $\L$ is ample on $X$ and $L=p^\ast \L$, we then find that $M^{n-b-1} L^bF=0$ implies that an intersection of $n-b-1$ general elements of $|\L|$ on $X$ does not contain the base locus, a contradiction. 
This proves the lemma.
\end{proof}

\begin{lemma} \label{lem:decomp}
In the notation of Section \ref{subsec:notation}, $Y$ has no hyperplane section which is numerically equivalent to $H_1+H_2+D$ with $H_1,H_2$ effective movable divisors and $D$ effective.

In particular, $Y$ is not covered by linear spaces of codimension 1.
\end{lemma}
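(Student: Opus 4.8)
The plan is to argue by contradiction and to transport a hypothetical decomposition of a hyperplane section from $Y$ back to $X$ along the resolution \eqref{eq:diagr}, where the hypothesis $\Pic X/\!\sim_{num}\,=\ZZ[\L]$ forces a numerical contradiction. Suppose $Y$ has a hyperplane section numerically equivalent to $H_1+H_2+D$ with $H_1,H_2$ effective and movable and $D$ effective. Pulling this relation back along $q$ gives, on $W$,
\[
M \;=\; q^\ast\O_Y(1)\;\equiv\; q^\ast H_1+q^\ast H_2+q^\ast D .
\]
I would then push forward along $p$. Since $p$ is a proper birational morphism of smooth projective varieties, $p_\ast$ respects numerical equivalence, and by the set-up $p_\ast M=\L$. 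Hence in $\Pic X/\!\sim_{num}\,=\ZZ[\L]$ we obtain
\[
[\L]\;\equiv\; p_\ast(q^\ast H_1)+p_\ast(q^\ast H_2)+p_\ast(q^\ast D).
\]

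Each of $q^\ast H_i$ and $q^\ast D$ is effective, so its pushforward is an effective divisor on $X$; as $\L$ is ample and generates $\Pic X$ numerically, every effective divisor has class $a[\L]$ for a unique integer $a\ge 0$ (positivity since $E\cdot\L^{n-1}>0$ for $E\neq 0$). Writing $p_\ast(q^\ast H_i)\equiv a_i[\L]$ and $p_\ast(q^\ast D)\equiv c[\L]$ with $a_1,a_2,c\in\ZZ_{\ge0}$, the displayed relation reads $1=a_1+a_2+c$. The whole argument then comes down to showing $a_1\ge 1$ and $a_2\ge 1$, which already yields $1\ge 2$, the desired contradiction.

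The main point — and the step I expect to be the real obstacle — is to verify that $a_i>0$, i.e.\ that a movable divisor on $Y$ does not pull back to a divisor supported entirely on the $p$-exceptional locus. Here I would use movability directly: since $H_i$ moves, I may take it to be a general member of its linear system, and since $\phi\colon X\dashrightarrow Y$ is dominant with $Y=\overline{\phi(X)}$, the preimage $\phi^{-1}(H_i)$ is a genuine nonzero divisor on $X$. Concretely, on the open set $U\subseteq X$ where $\phi$ is a morphism and $p$ is an isomorphism, $p_\ast(q^\ast H_i)$ restricts to $(\phi|_U)^\ast H_i$, which is nonzero because $\phi(U)$ is dense and a general $H_i$ meets it; the remaining, $p$-exceptional contributions have codimension $\ge 2$ and do not affect the divisor class. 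Thus $p_\ast(q^\ast H_i)$ is a nonzero effective divisor, so $a_i\ge 1$. The delicate issue to watch is precisely that an exceptional divisor of $p$ could a priori dominate $Y$ when $\dim Y<n$; the ``general member of a movable system'' formulation is exactly what sidesteps this, since it appeals to the dominance of $\phi$ rather than to the geometry of the exceptional divisors.

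For the final assertion I would show that a $Y$ covered by codimension-one linear spaces \emph{does} admit such a decomposition, and hence cannot occur. Let $\Lambda\cong\CP^{\dim Y-1}\subseteq Y$ be a general member of the covering family; being mobile it is movable. Because $\Lambda$ is linear we have $\langle\Lambda\rangle=\Lambda\subseteq Y$, so the intersection of all hyperplanes containing $\Lambda$ meets $Y$ exactly in $\Lambda$. The hyperplanes through $\Lambda$ form a system of dimension $\ge 1$ (as $\dim Y\le n$), and cutting them with $Y$ writes $\O_Y(1)=\Lambda+R$, where $R$ moves in a positive-dimensional system whose only possible fixed component, $\Lambda$, is not a component of the general $R$; hence $R$ is movable. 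Finally $\deg Y\ge n+2-\dim Y\ge 2$ by Lemma \ref{lem:B,degY}, so $R\neq 0$. Thus $\O_Y(1)\equiv\Lambda+R$ exhibits a hyperplane section of the forbidden shape (with $D=0$), contradicting the first part.
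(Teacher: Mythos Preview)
Your proof is correct and follows essentially the same approach as the paper: pull the decomposition back along $q$, push forward along $p$, and use that $\L$ generates $\Pic X/\!\sim_{num}$ freely to derive a contradiction. The paper phrases the contradiction as ``$p_\ast q^\ast H$ has at least two movable components,'' whereas you spell out the same idea numerically via $1=a_1+a_2+c$ with $a_1,a_2\ge 1$; your justification that $a_i\ge 1$ (choosing $H_i$ general in its moving system so that $\phi^{-1}(H_i)$ is a genuine divisor on the locus where $\phi$ is defined) is exactly the content behind the paper's terse ``hence so is $p_\ast q^\ast H$,'' and your treatment of the covering-by-linear-spaces corollary matches the paper's.
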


\begin{proof}
For a contradiction, suppose there is a hyperplane section $H$ of $Y$ that is numerically equivalent to $H_1+H_2+D$ with $H_1,H_2$ effective and movable (in the sense that they can be deformed to pass through general points of $Y$).
Then up to numerical equivalence, $q^*H$ is a Cartier divisor on $W$ with at least two movable components, and hence so is $p_*q^*H$. 
However, this divisor lies in the linear system $|\L|$, which contradicts the fact that $\L$ generates $\Pic X/\sim_{num}$ freely.

For the last statement, we assume $Y$ contains a linear subspace $H_1=\PP^{\dim Y-1}$ of codimension one whose deformations cover $Y$. 
Taking hyperplane sections containing $H_1$, we obtain a desired decomposition $H=H_1+H_2$ where $H_2$ is the residual effective divisor.
Since $Y\subset \CP^{n+1}$ is not contained in any linear subspace, it has degree at least two and so $H_2$ is nonzero.
For each $H_1$ there is at least a 1-dimensional family of hyperplanes in $\PP^{n+1}$ that contain it, and so we get a decomposition as above.
\end{proof}

\begin{lemma} \label{lem:dimY=1}
In the notation of Section \ref{subsec:notation}, assume that $n\geq 2$.
Then $\dim Y\geq 2$.
\end{lemma}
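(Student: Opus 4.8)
The plan is to dispose of the two small cases $\dim Y=0$ and $\dim Y=1$ by hand, exploiting that $\Pic X/\sim_{num}=\ZZ[\L]$. The case $\dim Y=0$ is immediate: since $\phi$ is given by $n+2$ general, hence linearly independent, sections of $\L$, the image $Y$ is a non-degenerate subvariety of $\PP^{n+1}$ (as already used in the proof of Lemma \ref{lem:B,degY}), and a single point cannot span $\PP^{n+1}$. Thus $\dim Y\geq 1$, and the real content is to exclude $\dim Y=1$.

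So I would argue by contradiction, assuming $Y$ is a curve. The key observation is that the movable class then degenerates to a multiple of a fibre class. Indeed, $\O_Y(1)$ has degree $\deg Y$ on the integral curve $Y$, so it is numerically $\deg Y\cdot[\mathrm{pt}]$; pulling back gives $M=q^*\O_Y(1)\equiv \deg Y\cdot\Phi$, where $\Phi=q^{-1}(\mathrm{pt})$ is a general fibre of $q$, an effective divisor of dimension $n-1$ on $W$. The next step is to push this forward to $X$. Because $\Phi$ is a general member of a family of fibres covering $W$, it is not contained in the proper closed locus $\mathrm{Exc}(p)\subseteq p^{-1}(\mathrm{Bs}|\L|)$ where $p$ fails to be an isomorphism; hence $p|_\Phi$ is generically injective and $D:=p_*\Phi$ is a nonzero effective divisor on $X$.

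Finally I would feed this into the normalization $p_*M=\L$ recorded in Section \ref{subsec:notation}. Since $p_*$ preserves numerical equivalence of divisors, pushing forward $M\equiv \deg Y\cdot\Phi$ yields $\L\equiv \deg Y\cdot D$. As $\Pic X/\sim_{num}=\ZZ[\L]$ and $D$ is a nonzero effective divisor, we must have $D\equiv a\L$ for some integer $a\geq 1$, whence $\L\equiv (a\deg Y)\L$ and therefore $a\deg Y=1$. This forces $\deg Y=1$, contradicting $\deg Y\geq n+2-\dim Y=n+1\geq 3$ from Lemma \ref{lem:B,degY}. Hence $\dim Y\geq 2$.

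The main obstacle, and essentially the only place needing care, is the combination $M\equiv \deg Y\cdot\Phi$ with the non-exceptionality of the general fibre: one must be sure that the general fibre of $q$ survives as a nonzero class under $p_*$, rather than being contracted. This is exactly the point where the geometry of the diagram \eqref{eq:diagr} enters, but it is harmless because the fibres of $q$ cover $W$ while $\mathrm{Exc}(p)$ is a proper closed subset, so the generic fibre meets $p$ isomorphically and $p_*\Phi\neq 0$; no characteristic hypothesis is required. Everything else is a formal consequence of $\Pic X/\sim_{num}$ being freely generated by $\L$.
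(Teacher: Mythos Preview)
Your argument is correct, and at its core it is the same mechanism the paper uses: a nontrivial splitting of $M=q^*\O_Y(1)$ into movable effective pieces pushes down to an impossible decomposition of $\L$ in $\Pic X/\sim_{num}=\ZZ[\L]$. The paper packages this as a one-line application of Lemma~\ref{lem:decomp} (a curve is covered by its points, which are codimension-one linear subspaces), whereas you bypass that lemma and argue directly via the numerical identity $M\equiv(\deg Y)\cdot\Phi$ and its pushforward $\L\equiv(\deg Y)\cdot p_*\Phi$. Your route is more self-contained for this single statement; the paper's route has the advantage that Lemma~\ref{lem:decomp} is formulated once and then reused many times in the later case analyses. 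One small point worth making explicit in your write-up: the assertion that $p_*$ preserves numerical equivalence of divisors relies on $X$ being smooth, so that the Gysin pullback $p^*$ on $1$-cycles and the projection formula $(p_*D)\cdot C=D\cdot p^*C$ are available.
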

\begin{proof}
Since $Y$ is not contained in a hyperplane of $\CP^{n+1}$, it must be positive-dimensional.
The lemma thus follows from Lemma \ref{lem:decomp}, which shows that $\dim Y=1$ is impossible.
\end{proof}

\subsection{The case when $\phi$ is a morphism}

\begin{lemma} \label{lem:H^n=d}
In the notation of Section \ref{subsec:notation}, suppose that $d$ is a prime number.
If $\L$ is base point free, i.e.\ $\phi$ is a morphism, then $X$ is isomorphic to a hypersurface of degree $d$ and $\phi$ is an isomorphism.
\end{lemma}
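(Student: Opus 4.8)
The plan is to identify $\phi$ as a finite birational morphism onto a hypersurface of degree $d$, and then to promote birationality to an isomorphism by exploiting the fact that $\phi$ is crepant.

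First I would observe that, since $\O_X(1)=\L$ is both base point free and ample, the induced morphism $\phi\colon X\to Y\subset\PP^{n+1}$ is finite: a curve $C$ contracted by $\phi$ would satisfy $\L\cdot C=\O_Y(1)\cdot\phi_*C=0$, contradicting ampleness of $\L$. In particular $\dim Y=\dim X=n$, so $Y$ is a hypersurface, and $\deg Y\ge 2$ by Lemma \ref{lem:B,degY}, since $Y$ is not contained in a hyperplane.

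Next, writing $e=\deg\phi$ for the degree of this finite map, the projection formula gives $d=\L^n=(\phi^*\O_Y(1))^n=e\cdot\deg Y$. As $d$ is prime and $\deg Y\ge 2$, the only possibility is $e=1$ and $\deg Y=d$. Hence $\phi$ is a finite birational morphism onto a hypersurface $Y$ of degree $d$.

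Finally I would show that $\phi$ is an isomorphism. Since $X$ is smooth, hence normal, and $\phi$ is finite and birational, $\phi$ realizes $X$ as the normalization of $Y$. Now $Y$, being a hypersurface, is Gorenstein with $\omega_Y=\O_Y(d-n-2)$ by adjunction, and the hypotheses give $K_X=\O_X(d-n-2)=\phi^*\omega_Y$, so $\phi$ is crepant. Applying Grothendieck duality for the finite morphism $\phi$ yields $\phi_*\omega_X=\sheafHom_{\O_Y}(\phi_*\O_X,\omega_Y)$, while the projection formula together with $\omega_X=\phi^*\omega_Y$ gives $\phi_*\omega_X=\phi_*\O_X\otimes\omega_Y$. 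Comparing these two descriptions as subsheaves of the constant sheaf $\omega_Y\otimes k(Y)$ and twisting by the invertible sheaf $\omega_Y^{-1}$, I would identify $\phi_*\O_X$ with the conductor ideal $\mathfrak c=\sheafHom_{\O_Y}(\phi_*\O_X,\O_Y)\subseteq\O_Y$ inside $k(Y)$; since $\mathfrak c\subseteq\O_Y\subseteq\phi_*\O_X$, this forces $\mathfrak c=\O_Y=\phi_*\O_X$, so $\phi$ is an isomorphism and $X\cong Y$ is a hypersurface of degree $d$. The main obstacle is precisely this last step: the intersection-theoretic part is routine, but ruling out non-normal (yet crepant) behaviour of $Y$ requires the duality computation above, whose content is that the normalization of a Gorenstein variety can be crepant only when it is already an isomorphism.
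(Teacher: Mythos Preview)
Your argument is correct, but the route you take to upgrade ``finite birational'' to ``isomorphism'' is genuinely different from the paper's.

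The paper proceeds geometrically: it intersects $n-1$ general elements of $\phi^\ast|\mathcal O_Y(1)|$ to obtain a smooth curve $C\subset X$ mapping onto a plane curve $C'\subset Y$ of degree $d$; comparing $\deg K_C$ with $\deg K_{C'}$ (both equal $d(d-3)$) shows $C\to C'$ is an isomorphism, hence $Y$ is regular in codimension one. Since a hypersurface is Cohen--Macaulay, Serre's criterion then gives normality of $Y$, and finite birational onto a normal target is an isomorphism. Your argument instead invokes the conductor formula via Grothendieck duality, which is the standard proof that a crepant finite birational morphism onto a Gorenstein scheme is an isomorphism. Your approach is slicker and more conceptual, and it explains exactly which structural feature (Gorenstein plus crepancy) is doing the work; the paper's approach is more elementary and hands-on, avoiding duality machinery entirely.

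One small point worth tightening: the sentence ``comparing these two descriptions as subsheaves of $\omega_Y\otimes k(Y)$'' hides a step. A priori you only obtain an \emph{abstract} $\O_Y$-module isomorphism $\phi_\ast\O_X\cong\mathfrak c$, and abstract isomorphism of fractional ideals does not by itself force equality (think of the affine cusp, where $\mathfrak c\cong\phi_\ast\O_X$ as modules yet $Y$ is not normal). What saves you is projectivity: the isomorphism $\omega_X\cong\phi^\ast\omega_Y$ is unique up to a scalar in $k^\ast=H^0(X,\O_X)^\ast$, so the resulting identification of $\phi_\ast\O_X$ with $\mathfrak c$ inside $k(Y)$ is off by at most a nonzero constant, giving genuine equality. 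Equivalently, $\mathfrak c_X\subset\O_X$ is an invertible ideal abstractly isomorphic to $\O_X$, hence cuts out an effective divisor linearly equivalent to zero, which on a projective variety must be zero. Either way your conclusion stands, but it is worth making this point explicit.
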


\begin{proof}
Since $\L$ is base point free, $L=M$ and $\phi$ is a morphism.
Hence,  
$$
d=L^n=M^n=\deg Y\cdot \deg \phi .
$$
Since $d$ is prime and $\deg Y\ge 2$, we conclude that $Y$ is a hypersurface of degree $d$ and $\deg \phi=1$, i.e.\ $\phi$ is a birational morphism.

Let $C$ denote the complete intersection of $n-1$ elements of $\phi^\ast|\mathcal O_Y(1)|$. 
Then $C$ is a smooth curve such that 
$$
K_{C}=\O_X(-n-2+d+(n-1))|_C=\O_X(d-3)|_C,
$$
which has degree $d(d-3)$. 
Note that $C$ maps onto a complete intersection curve $C'\subset Y$.
Since $Y$ is a degree $d$ hypersurface, $C'$ is a plane curve of degree $d$. 
In particular, $C'$ is Gorenstein, and $K_{C'}=\O_Y((d-3)H)|_{C'}$ has degree $d(d-3)$ as well. 
It follows that $C\simeq C'$ is isomorphic to a plane curve of degree $d$. 
In particular, $Y$ is non-singular in codimension 2. 
Since $Y$ is a hypersurface in a smooth variety, it is also Cohen--Macaulay, and hence it is normal. 
Then since $\phi: X\to Y$ is finite of degree one, it is an isomorphism.
In particular, $X$ is isomorphic to a hypersurface of degree $d$, which concludes the lemma.
\end{proof}


\subsection{Bounds on $M^n$ and $\deg Y$}
The following generalizes a result of Horikawa \cite[Lemma 2]{horikawa}. 

\begin{lemma} \label{lem:Hodgeindex}
In the notation of Section \ref{subsec:notation}, we have the following
\begin{enumerate}[(a)]
\item $M^n\leq d$ and $n+2-\dim Y \leq \deg Y\leq d$; \label{item:degY}
\item  
if $M^n= d$, then $\L$ is base point free; \label{item:M^n=d}
\item 
if $M^n=d-1$, then $\L$ has exactly one base point.
If furthermore $\operatorname{char} k=0$, then the general element in $|\L|$ is smooth. \label{item:M^n=d-1}
\end{enumerate} 
\end{lemma}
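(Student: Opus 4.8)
For the inequality $M^n\le d$ in part (a) and for part (b) I would run a telescoping comparison of the two nef classes $L$ and $M$. Writing $a_k:=L^{n-k}M^{k}$ for $0\le k\le n$, one has the identity $a_k-a_{k+1}=L^{n-k-1}M^{k}F$, and each such term is $\ge 0$ by Lemma~\ref{lem:L^bM^n-b-1F}. Hence $d=L^n=a_0\ge a_1\ge\cdots\ge a_n=M^n$, which gives $M^n\le d$. For part (b), if $M^n=d$ then every difference $a_k-a_{k+1}=L^{n-k-1}M^{k}F$ vanishes; were $\L$ not base point free, Lemma~\ref{lem:L^bM^n-b-1F} would yield $L^{b}M^{n-b-1}F>0$ with $b=\dim B$, a contradiction, so $\L$ is base point free.

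For the degree bounds in (a), the lower bound $n+2-\dim Y\le\deg Y$ is exactly Lemma~\ref{lem:B,degY}. For $\deg Y\le d$, set $m=\dim Y$ and compare $\deg Y$ with $a_m=L^{n-m}M^{m}$. Since $M=q^{*}\O_Y(1)$, the class $M^{m}$ is represented by the preimage of $\deg Y$ general points of $Y$, i.e.\ by $\deg Y$ general fibres of $q$. A general fibre $Z$ passes through a general (hence non-exceptional) point of $W$, so $Z$ is not contained in the $p$-exceptional locus; therefore $p|_Z$ is generically finite onto an $(n-m)$-dimensional image and $L^{n-m}\cdot Z=\L^{n-m}\cdot p_*Z\ge 1$. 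This gives $\deg Y\le a_m$, and the chain gives $a_m\le a_0=d$.

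Part (c) is the substantial one. Assuming $d\ge 2$, the equality $M^n=d-1\ge 1$ forces $\dim Y=n$, and by (b) the system $\L$ is not base point free, so $b:=\dim B$ is defined. The single unit of defect $a_0-a_n=1$ must sit in exactly one step of the chain, and by Lemma~\ref{lem:L^bM^n-b-1F} the unique nonzero difference is $L^{b}M^{n-b-1}F=1$; thus $a_k=d$ for $k\le n-1-b$ and $a_k=d-1$ for $k\ge n-b$. The decisive input is now the Hodge-index (Khovanskii--Teissier) log-concavity $a_k^{2}\ge a_{k-1}a_{k+1}$ for the nef classes $L,M$: applied at $k=n-b$ (which requires $b\ge 1$) it reads $(d-1)^{2}\ge d(d-1)$, i.e.\ $d\le 1$, absurd. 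Hence $b=0$, the base locus is finite, the defect lies in the bottom step $M^{n-1}F=1$, and $L^{n-1}F=0$.

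The remaining work, which I expect to be the main obstacle, is to transfer this to $X$ itself and to establish smoothness. Restricting to a general surface section $\tilde S$, a complete intersection of $n-2$ general members of the base-point-free system $|M|$, one computes $(L\cdot F)|_{\tilde S}=L^{n-1}F=0$, $(M\cdot F)|_{\tilde S}=M^{n-1}F=1$ and hence $(F^{2})|_{\tilde S}=(L-M)\cdot F|_{\tilde S}=-1$. Since $L|_{\tilde S}$ is nef and big and meets $F|_{\tilde S}$ in zero, the effective curve $F|_{\tilde S}$ is contracted by $p$; the values $F^{2}=-1$, $M\cdot F=1$ together with negativity of contracted curves then force $F|_{\tilde S}$ to be a single $(-1)$-curve blown down to one reduced (smooth) point, the unique base point of $\L$ on $p(\tilde S)$. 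The delicate points are then to rule out further base points whose exceptional image in $Y$ is too small to be met by the general $\tilde S$ (so as to obtain \emph{exactly one} base point on $X$), and, in characteristic zero, to deduce smoothness of the general member of $|\L|$: away from the base point this is Bertini, while at the base point one argues that the strict transform of a general member is a general, hence smooth and transverse, member of $|M|$ meeting the $(-1)$-curve, so that blowing down produces a smooth point on a smooth divisor. I expect this passage from the surface section back to $X$, and the characteristic-zero smoothness near the base point, to be the main technical hurdle.
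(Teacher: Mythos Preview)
Your treatment of (a), (b), and the reduction to $b=0$ in (c) is essentially the paper's proof. The telescoping chain $d=a_0\ge a_1\ge\cdots\ge a_n=M^n$ with $a_k-a_{k+1}=L^{n-1-k}M^kF\ge 0$ is exactly what the paper writes down. Your use of Khovanskii--Teissier log-concavity to kill $b\ge 1$ is just a repackaging of the paper's argument: the paper restricts to a surface with class $(mL)^{b-1}M^{n-b-1}$, observes $M\cdot F=0$ and $F^2=L^{b-1}M^{n-b-1}F^2=1>0$ there, and invokes Hodge index directly. (One typo: on your surface $\tilde S$ of class $M^{n-2}$ you write $(L\cdot F)|_{\tilde S}=L^{n-1}F$; it is $LM^{n-2}F$. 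Both vanish here, so the computation survives.) You also omit the case $d=1$, which the paper handles separately by a short argument using $\L^n=1$ together with $\deg Y\ge 2$.

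Where you diverge from the paper is the endgame of (c), and here the paper's route is both simpler and avoids the difficulties you anticipate. Once $b=0$ and $M^{n-1}F=1$, the paper does not pass to a surface; it takes the \emph{curve} $C$ cut by $n-1$ general members of $|M|$. Then $p_*C$ equals the intersection $1$-cycle $D_1\cdots D_{n-1}$ on $X$, whose support is all of $D_1\cap\cdots\cap D_{n-1}$ (this intersection is purely $1$-dimensional since $\dim B=0$) and hence contains every base point. Since $\operatorname{Supp}F=p^{-1}(\operatorname{Bs}|\L|)$, the curve $C$ meets $F$ over each base point, and $C\cdot F=M^{n-1}F=1$ forces exactly one base point $x$ and forces the coefficient of the relevant component of $F$ to be $1$; this in turn gives $\operatorname{mult}_xD=1$ for general $D\in|\L|$, and Bertini finishes.

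Your surface route runs into two genuine issues. First, the assertion that $F|_{\tilde S}$ is a single $(-1)$-curve is false in general: for instance $F|_{\tilde S}=C_1+C_2$ with $C_1^2=-1$, $C_2^2=-2$, $C_1\cdot C_2=1$, $M\cdot C_1=0$, $M\cdot C_2=1$ is consistent with all your numbers. What one can show is that $F|_{\tilde S}$ is \emph{connected} (a connected component orthogonal to $M$ would satisfy $F\cdot(\text{component})=0$, contradicting negative definiteness), which still yields a single base point. Second, and more seriously, your smoothness argument only shows $D|_S$ is smooth at $x$; to transfer this to $D$ you would need $S$ smooth at $x$, but $S$ is itself an intersection of members of $|\L|$ through $x$, so this is circular. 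The paper's curve argument sidesteps this entirely: the single intersection number $M^{n-1}F=1$ gives $\operatorname{mult}_xD=1$ directly, with no need to control the singularities of an auxiliary surface.
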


\begin{proof} 
By Lemma \ref{lem:L^bM^n-b-1F}, $L^iM^{n-i-1}F\geq 0$ for all $i$.
Using $L=M+F$, this shows that
\begin{equation}\label{ineqs}
d=L^n\ge L^{n-1}M\ge L^{n-2}M^2\ge \cdots \ge M^n .
\end{equation}
In particular, $M^n\leq d$.
Moreover, $M^n=d$ implies $L^iM^{n-i-1}F=0$ for all $i$ and so $\L$ is base point free by Lemma \ref{lem:L^bM^n-b-1F}.
This proves item (\ref{item:M^n=d}) and the first part of item (\ref{item:degY}).
To prove the second part of item (\ref{item:degY}), let $y=\dim Y$ and let $C$ be a general fibre of $q$.
Then $M^y=\deg Y\cdot C$ in $\CH^y(W)$.
By (\ref{ineqs}), $d\geq L^{n-y}M^y$ and so the projection formula shows
$$
d\geq p_\ast( L^{n-y}M^y)=\deg Y \cdot \mathcal O_X(1)^{n-y}\cdot p_\ast C .
$$
Since $\L$ is ample on $X$ and $p_\ast C\neq 0$ because $C$ is a general fibre of $q$, $\L^{n-y}p_\ast C >0$ and so we conclude $\deg Y\leq d$.
The lower bound on $\deg Y$ follows from Lemma \ref{lem:B,degY}, which concludes the proof of item (\ref{item:degY}).

It remains to treat the case $M^n= d-1$. 
If $d=1$, then for any $i\leq n$, the intersection of general elements $D_1,\dots ,D_i\in |\L|$ must be irreducible and reduced, and of codimension $i$.
This shows that $\dim B=0$.
As a consequence, $p_\ast q^\ast (H_1\cdots H_{\dim Y})$ is irreducible and reduced for any general elements $H_i\in |\mathcal O_Y(1)|$.
This is a contradiction, because $\deg Y\geq 2$ by Lemma \ref{lem:B,degY}.

We may thus assume $d\geq 2$ and so $M^n=d-1$ shows that $\dim Y=n$, i.e.\ $M$ is big and nef.
Since $M^n=d-1$, the above inequality implies that among the terms $L^i M^{n-i-1}F$ for $i=0,\ldots,n-1$, all are in $\{0,1\}$ and exactly one equals 1. 
Since $M^n=d-1$, $\L$ is not base point free and we let $b=\dim B$ be the dimension of the base locus of $\L$.
By Lemma \ref{lem:L^bM^n-b-1F}, $L^bM^{n-b-1}F>0$ and so we must have $L^b M^{n-b-1}F=1$.

If $b>0$, then it follows that $L^{b-1}M^{n-b}F=0$ and so
$$
1=L^b M^{n-b-1}F=L^{b-1}M^{n-b}F+L^{b-1}M^{n-b-1}F^2=L^{b-1}M^{n-b-1}F^2 .
$$ 
This contradicts the Hodge index theorem (see e.g.\ \cite[Theorem 10.1]{KK}) 
applied to the resolution of an integral surface $S\subset W$ with class $(mL)^{b-1}M^{n-b-1}$ for some $m\gg 0$, because $M|_S$ is base point free and big, and $F|_S$ a non-zero effective divisor. 
Hence $b=0$. 

Since $b=0$ and $\phi$ is given by general sections of $\L$, the intersection of $n-1$ general elements of $|M|$ maps down to an intersection of $n-1$ general elements of $|\L|$, and so it must contain the base locus.
Therefore, $M^{n-1}F=1$ implies that the base locus of $\L$ is given by only one point $x\in X$. 
Moreover, the coefficient of the exceptional divisor in $F$ that meets the intersection of $n-1$ general elements of $|M|$ is $1$ and so a generic element $D\in |\L|$ has multiplicity one at $x$.
If additionally $\operatorname{char}k=0$, then it follows that $D$ is smooth, as it is smooth away from the base locus by Bertini's theorem in characteristic zero.
This concludes the proof of the lemma.
\end{proof}

\subsection{The case when $\phi$ is birational} \label{phibirational}

\begin{proposition} \label{prop:degq=1} 
In the notation of Section \ref{subsec:notation}, suppose that $d\geq n+2$.
If $\phi:X\dashrightarrow Y$ is birational, then $\phi$ is an isomorphism.
\end{proposition}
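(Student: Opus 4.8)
The plan is to compare the geometric genus of a general curve section of $X$ with that of the corresponding plane curve on $Y$, and to extract from the hypothesis $d\geq n+2$ a genus inequality that forces $\deg Y=d$, hence base-point-freeness. Since $\phi$ is birational, $\dim Y=n$, so $Y\subset \CP^{n+1}$ is a hypersurface; I write $e:=\deg Y=M^n$, where $2\leq e\leq d$ by Lemmas \ref{lem:B,degY} and \ref{lem:Hodgeindex}(\ref{item:degY}). I would then take $C\subset W$ to be the smooth curve cut out by $n-1$ general members of $|M|=q^\ast|\mathcal O_Y(1)|$. Under $q$ this curve maps birationally onto $C'=Y\cap H_1\cap\cdots\cap H_{n-1}$, a plane curve of degree $e$ in the $\CP^2\subset \CP^{n+1}$ spanned by the $H_i$; hence $C$ is the normalization of $C'$ and
$$
g(C)=p_g(C')\leq p_a(C')=\binom{e-1}{2}.
$$

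For the lower bound I would compute $g(C)$ by adjunction on $W$. Writing $K_W=p^\ast K_X+E_p=(d-n-2)L+E_p$ with $E_p\geq 0$ the ($p$-exceptional) relative canonical divisor, and substituting $L=M+F$, one obtains
$$
2g(C)-2=(K_W+(n-1)M)\cdot M^{n-1}=(d-n-2)\bigl(e+F\cdot M^{n-1}\bigr)+E_p\cdot M^{n-1}+(n-1)e.
$$
Here every contribution is controlled: $M$ is nef while $F$ and $E_p$ are effective, so the two correction terms are $\geq 0$, and — crucially — the coefficient $d-n-2$ is $\geq 0$ precisely because $d\geq n+2$. This yields $2g(C)-2\geq (d-3)e$, i.e.\ $g(C)\geq \tfrac{(d-3)e+2}{2}$. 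Combining with the upper bound $g(C)\leq\binom{e-1}{2}=\tfrac{(e-1)(e-2)}{2}$ gives $(d-3)e\leq (e-3)e$, whence $d\leq e$; together with $e\leq d$ this forces $e=d$.

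It then remains to run the endgame. The equality $M^n=e=d=L^n$ forces $\mathcal O_X(1)$ to be base point free by Lemma \ref{lem:Hodgeindex}(\ref{item:M^n=d}); thus $F=0$, $L=M$, and $p$ is an isomorphism, so $\phi\colon X\to Y$ is a finite birational morphism onto the degree-$d$ hypersurface $Y$. Moreover $e=d$ forces the lower and upper genus bounds to agree, so $p_g(C')=p_a(C')$, the general plane section $C'$ is smooth, and $Y$ is regular in codimension two; being a hypersurface it is Cohen--Macaulay, hence normal, and $\phi$ is an isomorphism by Zariski's main theorem exactly as at the end of the proof of Lemma \ref{lem:H^n=d}.

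The heart of the argument is the genus inequality, and the one conceptually essential point is the sign of the coefficient $d-n-2$: it is the hypothesis $d\geq n+2$ (equivalently, effectivity of $K_X$) that makes the $W$-side lower bound dominate the plane-curve upper bound, so this is where that assumption is genuinely used. The only technical care I anticipate is ensuring that the general linear section $C$ is smooth and irreducible; I would obtain this from Bertini applied to the base-point-free system $|M|$ and to the integral hypersurface $Y$, within the standing assumption that $\phi$ admits the resolution \eqref{eq:diagr}.
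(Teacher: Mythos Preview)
Your proof is correct and follows essentially the same approach as the paper: both compare the canonical degree (equivalently, genus) of the curve $C=M^{n-1}\subset W$ with that of its plane-curve image $C'\subset Y$, using $d\geq n+2$ to make the correction terms $(d-n-2)F\cdot M^{n-1}$ and $E_p\cdot M^{n-1}$ non-negative and thereby force $M^n=d$. Your endgame is actually slightly cleaner than the paper's, since you redo the normality argument directly rather than citing Lemma~\ref{lem:H^n=d} (whose statement nominally assumes $d$ prime, though its proof goes through once birationality is given).
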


\begin{proof} 
Recall the diagram
$$
\begin{tikzcd}
 & W \arrow[ld, "p"'] \arrow[rd, "q"] &  \\
X  \arrow[rr, "\phi", dashed]&  & Y \subset \PP^{n+1}
\end{tikzcd} 
$$ 
We need to show that $\deg(q)=1$ implies that $\phi$ is an isomorphism.

Let $C\subset W$ be the intersection of $n-1$ general elements of $|M|$.
Since $\deg(q)=1$, $C':=q(C)$ is a plane curve of degree $d':=M^n$ and so $C'$ is Gorenstein with
$$
\deg(K_{C'})=(-3+d')d' .
$$
On the other hand,
$$
K_{C}=(K_W+(n-1)M)|_C ,
$$
where $K_W=(-n-2+d)(M+F)+\sum a_iE_i$ with $a_i\geq 1$, where $E_i$ denote the $p$-exceptional divisors of $W$.
We then get
$$
\deg(K_C)=(-3+d)M^{n}+(-n-2+d)F\cdot M^{n-1}+\sum a_iE_i\cdot M^{n-1}.
$$
Since $C$ is the normalization of $C'$, $\deg(K_{C})\leq \deg(K_{C'})$ and so
\begin{equation}\label{eq:ineq:degK}
(-3+d)M^{n}+(-n-2+d)F\cdot M^{n-1}+\sum a_iE_i\cdot M^{n-1}\leq (-3+d')d' .
\end{equation}
On the other hand, $F\cdot M^{n-1}\geq 0$ and $E_i\cdot M^{n-1}\geq 0$ for all $i$ because $M$ is base point free.
Moreover, $M^n=d'\leq d$ by Lemma \ref{lem:Hodgeindex}.
Since $-n-2+d$ is non-negative by assumption and $a_i\geq 1$ for all $i$, (\ref{eq:ineq:degK}) thus implies $(-n-2+d)F\cdot M^{n-1}=0$ and $E_i\cdot M^{n-1}=0$ for all $i$.
Hence,  
$$
(-3+d')d'\geq (-3+d)M^{n}=(-3+d)d'.
$$
This implies $d=d'$, because $d\geq d'$ by Lemma \ref{lem:Hodgeindex}.
Hence, $\phi$ is a morphism by Lemma \ref{lem:Hodgeindex} and so it is an isomorphism by Lemma \ref{lem:H^n=d}.
This concludes the proof of the proposition. 
\end{proof}

\section{Reduction to lower-dimensional cases for quintics}

 The main reduction in the proof of Theorem \ref{quintictheorem} is the following result, due to Mori, which allows us to restrict to low-dimensional cases provided we can find smooth hyperplane sections in $|\L|$, see \cite[Theorem 3.6]{mori}.
 
\begin{theorem}[Mori]\label{hyperplanesection}
Let $X$ be a normal projective variety of dimension $n\ge 3$ over a field $k$, equipped with an ample line bundle $\mathcal O_X(1)$.
Let $Z\in |\O_X(1)|$ and suppose that the global sections of $\mathcal O_X(1)|_Z$ embed $Z$ as a hypersurface in $\CP^n_k$.
Then $X$ is isomorphic to a hypersurface in $\CP_k^{n+1}$ 
such that $\L$ corresponds to the pullback of $\mathcal O_{\CP^{n+1}_k}(1)$. 
\end{theorem}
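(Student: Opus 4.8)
The plan is to reconstruct $X$ from its section ring $R=\bigoplus_{m\ge 0}H^0(X,\O_X(m))$ and to identify $R$ with a graded hypersurface ring in $n+2$ variables. Let $z\in H^0(X,\O_X(1))$ be the section cutting out $Z$ and let $d=\deg Z$. Since $Z\subset\CP^n_k$ is a degree-$d$ hypersurface, it is projectively normal, with homogeneous coordinate ring $R(Z)=\bigoplus_m H^0(Z,\O_Z(m))\cong k[x_0,\dots,x_n]/(f)$, $\deg f=d$. Everything rests on the restriction maps $\rho_m\colon H^0(X,\O_X(m))\to H^0(Z,\O_Z(m))$ coming from $0\to\O_X(m-1)\xrightarrow{z}\O_X(m)\to\O_Z(m)\to 0$. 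If each $\rho_m$ is surjective, then $z$ is a nonzerodivisor on $R$ (as $X$ is integral) and $R/zR\cong R(Z)$, and the problem reduces to the purely algebraic task of lifting the presentation of $R(Z)$.

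The main obstacle is exactly this surjectivity, which amounts to the vanishing $H^1(X,\O_X(m-1))=0$ for $m\ge 1$. I would prove $H^1(X,\O_X(m))=0$ for all $m\ge 0$ by an upward induction. Because $Z$ is a hypersurface of dimension $n-1\ge 2$, it has no intermediate cohomology: the ideal-sheaf sequence on $\CP^n_k$ together with $H^i(\CP^n_k,\O(j))=0$ for $0<i<n$ gives $H^1(Z,\O_Z(m))=0$ for every $m$. Feeding this into the cohomology sequence of the displayed short exact sequence shows that multiplication by $z$ induces a surjection $H^1(X,\O_X(m-1))\twoheadrightarrow H^1(X,\O_X(m))$ for every $m$. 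The induction is anchored at $m\ll 0$, where the Enriques--Severi--Zariski lemma (applicable since $X$ is normal of dimension $\ge 2$) gives $H^1(X,\O_X(m))=0$; since a surjection out of the zero group has zero target, the vanishing propagates upward to all $m\ge 0$. Hence $\rho_m$ is surjective for every $m\ge 1$, and the exact sequences become $0\to R_{m-1}\xrightarrow{z}R_m\to R(Z)_m\to 0$, so $R/zR\cong R(Z)$ as graded rings.

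With this in hand I would finish by Hilbert-function bookkeeping. Lifting the generators $x_i$ of $R(Z)$ to $\tilde x_i\in H^0(X,\O_X(1))$, a graded Nakayama argument shows that $z,\tilde x_0,\dots,\tilde x_n$ generate $R$, giving a surjection $\pi\colon k[X_0,\dots,X_n,W]\twoheadrightarrow R$ with $W\mapsto z$. Reducing modulo $W$ recovers the presentation of $R(Z)$, so $\ker\pi$ contains a homogeneous $G$ of degree $d$ with $G\equiv f\pmod W$; as $G$ is a nonzerodivisor in the polynomial ring, $k[X_0,\dots,X_n,W]/(G)$ has Hilbert function $\binom{m+n+1}{n+1}-\binom{m-d+n+1}{n+1}$. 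Telescoping $\dim_k R_m=\dim_k R_{m-1}+\dim_k R(Z)_m$ with $\dim_k R(Z)_m=\binom{m+n}{n}-\binom{m-d+n}{n}$ yields the same value for $\dim_k R_m$, so the surjection $k[X_0,\dots,X_n,W]/(G)\to R$ is an isomorphism in each degree. Thus $R\cong k[X_0,\dots,X_n,W]/(G)$. Finally, since $\O_X(1)$ is ample and $X$ is normal, the natural morphism $X\to\Proj R$ is an isomorphism; as $R$ is generated in degree one, $\Proj R$ is precisely the degree-$d$ hypersurface $V(G)\subset\CP^{n+1}_k$, with $\O_X(1)$ identified with $\O_{\CP^{n+1}_k}(1)|_X$. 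The only external input I would cite is the standard fact that the full section ring of an ample line bundle on a normal projective variety recovers $X$ as its $\Proj$.
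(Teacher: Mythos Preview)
The paper does not supply a proof of this theorem; it is quoted as a result of Mori with a reference to \cite[Theorem 3.6]{mori}, so there is no ``paper's own proof'' to compare against. Your argument is correct and is essentially the classical section-ring approach that underlies Mori's result: the vanishing $H^1(Z,\O_Z(m))=0$ (valid because $\dim Z=n-1\ge 2$, which is exactly where the hypothesis $n\ge 3$ enters) turns the long exact sequence into a surjection $H^1(X,\O_X(m-1))\twoheadrightarrow H^1(X,\O_X(m))$, and the Enriques--Severi--Zariski lemma anchors the induction at $m\ll 0$; from $R/zR\cong R(Z)$ the lift of the single relation and the Hilbert-function comparison finish the job. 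One small point worth making explicit is why $|\O_X(1)|$ is base-point free (so that the map $X\to\Proj R$ is a morphism): any base point would lie on $Z$, but the surjection $H^0(X,\O_X(1))\to H^0(Z,\O_Z(1))$ together with very ampleness of $\O_Z(1)$ rules this out. The final identification $X\cong\Proj R$ is indeed the standard fact that the section ring of an ample line bundle on a normal projective variety recovers the variety (e.g.\ via the Veronese trick and the description of the integral closure of the homogeneous coordinate ring), and this is the one place the normality hypothesis is genuinely used.
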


As shown in Horikawa's examples \cite{horikawa}, even in characteristic zero it is not true that under our given assumptions  the linear system $|\L|$ is base point free, so we cannot apply Proposition \ref{hyperplanesection} directly. 
In particular, it is not clear that $|\L|$ contains smooth elements. 
However, we at least have the following result of H\"oring--Voisin \cite[Theorem 1.6]{HV}, which builds on work of Kawamata \cite{kawamata} and Floris \cite{floris}.

\begin{theorem}[H\"oring--Voisin]\label{HVthm}
Let $k$ be an algebraically closed field of characteristic zero.
Let $X$ be a smooth Fano variety of dimension $n$ and index $n-3$  over $k$, and let $\L$ be the ample line bundle such that $-K_X=\mathcal O_X(n-3)$. 
Suppose further that $h^0(X,\L)\geq n-2$
and let $Z$ be a general intersection of $n-3$ divisors of $|\L|$. 
Then $Z$ is a threefold with isolated canonical singularities.
\end{theorem}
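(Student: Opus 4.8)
The plan is to prove the statement by descending induction on $n$, cutting $X$ by one general member of $|\L|$ at a time and keeping track of two invariants along the way: the singularities of the successive members and the number of sections of the polarization. The starting observation is adjunction: since $Z$ is cut out by $n-3$ general divisors in $|\L|$ and $-K_X=\O_X(n-3)$,
\[
K_Z=\bigl(K_X+(n-3)\O_X(1)\bigr)\big|_Z=\O_Z,
\]
so $Z$ is a Gorenstein threefold with trivial dualizing sheaf. Thus the conclusion splits into two independent assertions: (a) $Z$ has canonical singularities, and (b) $\Sing Z$ is $0$-dimensional. The induction is fuelled by the fact that these varieties are stable under cutting: for a general $D\in|\L|$ one has $K_D=(K_X+\L)|_D=\O_D(-(n-4))$, so $D$ is again a variety of the same type, of dimension $n-1$ and index $(n-1)-3$. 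Moreover, from $0\to\O_X\to\O_X(1)\to\O_D(1)\to 0$ and $H^1(X,\O_X)=0$ (Kawamata--Viehweg vanishing on the Fano $X$) one gets $h^0(D,\O_D(1))\ge h^0(X,\O_X(1))-1\ge (n-1)-2$; hence the hypothesis $h^0\ge \dim -2$ is exactly preserved, and $h^0(X,\O_X(1))\ge n-2$ is precisely what is needed to choose $n-3$ general members in the first place.

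For the isolatedness (b), I would argue as follows. Write $B=\operatorname{Bs}|\L|$. Since every member of $|\L|$ contains $B$, we have $B\subseteq Z$, and on the complement $X\setminus B$ the system $|\L|$ is base point free, so iterated application of Bertini's theorem (here characteristic zero is used) shows that a general complete intersection is smooth there. Consequently $\Sing Z\subseteq B$, and it suffices to prove that $B$ is finite. This is the content of the structure theory of fundamental divisors on Fano varieties of index $n-3$ (Kawamata's non-vanishing and base-point analysis, refined by Floris): using the Fano condition and the bound $h^0\ge n-2$, the base locus of $|\L|$ has dimension $\le 0$. I would reprove the bound by the same inductive slicing, reducing to the surface and curve cases, where finiteness of the base locus is elementary, and propagating it upward via the section count above.

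The canonicity (a) is where I expect the real difficulty to lie, since near a base point all members of $|\L|$ are singular and Bertini gives no information. The plan is to show at each inductive step that a general member $D\in|\L|$ has canonical singularities --- a \emph{general elephant} statement --- and then invoke inversion of adjunction to descend canonicity to the complete intersection. To produce the elephant I would argue by contradiction with vanishing theorems: if the pair $(X,D)$ were not canonical, then (being canonical off $B$ by Bertini) its non-canonical locus would be a proper subvariety supported on $B$; since $-(K_X+D)=(n-4)\L$ is nef and big for $n\ge 5$, Nadel vanishing applied to the multiplier ideal of $(X,D)$ together with Koll\'ar--Shokurov connectedness of the non-klt locus would force that centre to be too large to coexist with the bound $h^0\ge n-2$, a contradiction. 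The two delicate points are the base of the induction --- the low-dimensional Fano and Calabi--Yau cases, where the case $-(K_X+D)=0$ (i.e.\ $n=4$) is no longer big and the elephant must be produced directly --- and the verification that canonicity of the successive general members is genuinely inherited by $Z$; this is exactly where both characteristic zero (for Bertini and for the vanishing theorems) and the precise index $n-3$ are essential.

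Combining (a) and (b), $Z$ is a Gorenstein threefold with $K_Z=\O_Z$, canonical singularities, and finite singular locus, that is, a threefold with isolated canonical singularities.
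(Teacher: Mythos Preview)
This theorem is not proved in the paper. It is explicitly attributed to H\"oring--Voisin and quoted with a reference to \cite[Theorem 1.6]{HV}, together with the remark that it ``builds on work of Kawamata \cite{kawamata} and Floris \cite{floris}''. The paper uses it as a black box (in Proposition \ref{prop:HV} and in Section \ref{subsec:dimY=4}) and never supplies an argument. There is therefore no proof in the paper to compare your proposal against.

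As for the proposal itself, it is a plausible outline but not a proof. The two places you flag as ``delicate'' are in fact the entire content of the theorem. For (b), asserting that $\operatorname{Bs}|\L|$ is finite by ``the structure theory of fundamental divisors \dots\ refined by Floris'' and then saying you would ``reprove the bound by the same inductive slicing'' is circular: Floris' paper \cite{floris} is precisely about the fundamental divisor on Fano varieties of index $n-3$, and his result is one of the inputs H\"oring--Voisin need; you cannot simply quote it and also promise to reprove it without giving any indication of how the base case is handled. For (a), the sketch via Nadel vanishing and connectedness is too vague to be checkable --- you do not say which multiplier ideal, which centre, or how ``too large'' is quantified --- and you yourself acknowledge that the case $n=4$ (where $-(K_X+D)$ is trivial) and the inheritance of canonicity under restriction are left open. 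Those are exactly the steps where the work of Kawamata (effective non-vanishing, \cite{kawamata}) and the extension argument in \cite{HV} enter, and nothing in your write-up substitutes for them.
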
 

Note however that in the above result, $Z$ might fail to be $\QQ$-factorial (see \cite[Example 2.12]{HV} for an example when $n=4$).
Even though we will not use this here, note that the assumption $h^0(X,\L)\geq n-2$ is automatically satisfied in Theorem \ref{HVthm}, see \cite{liu}.

The main observation of this section is the following.

\begin{proposition} \label{prop:HV}
In the notation of Theorem \ref{HVthm}, assume that $n \geq 5$.
Then a general element $D\in |\L|$ is smooth.
\end{proposition}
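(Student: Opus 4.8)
The plan is to argue by contradiction: assuming a general $D\in|\L|$ is singular, I will cut down to the threefold $Z$ supplied by Theorem \ref{HVthm} and exhibit a point of $Z$ whose embedding dimension is too large to be a canonical Gorenstein threefold singularity. First I record the structure of $Z=D_1\cap\dots\cap D_{n-3}$ for general $D_i\in|\L|$: it is a complete intersection of dimension $n-(n-3)=3$ in the smooth variety $X$, hence Gorenstein, and adjunction gives
$$
K_Z=\bigl(K_X+(n-3)\L\bigr)|_Z=\O_X(3-n+n-3)|_Z=\O_Z,
$$
so $\omega_Z$ is trivial. By Theorem \ref{HVthm}, $Z$ has only isolated canonical singularities; being Gorenstein and canonical, these singularities are compound Du Val (Reid), and in particular every point of $Z$ has embedding dimension at most $4$.

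Next I would locate a common singular point of the $D_i$. The key assertion is that in characteristic zero the singular locus of a general member of $|\L|$ is a \emph{fixed} subset of $\mathrm{Bs}(|\L|)$, with no moving singular points. To see this I would use the resolution $p\colon W\to X$ of Section \ref{subsec:notation}, on which the moving part $M$ of $L=p^\ast\L$ is base point free, so that a general $\tilde D\in|M|$ is smooth by Bertini and its image $D=p(\tilde D)$ is a general member of $|\L|$. Since $p$ is an isomorphism away from its exceptional locus, $\Sing(D)$ lies over the centres $B_i=p(E_i)\subset \mathrm{Bs}(|\L|)$. A transversality analysis of the general $\tilde D$ against the (fixed) fibres of $p$ shows that $D$ is singular precisely over those centres along which the general member acquires multiplicity $\geq 2$, a locus independent of the general $D$. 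Because "being singular at a given point $x$" is a linear condition on $H^0(X,\L)$, a general member is singular at $x$ if and only if every member is. Hence, if the general $D$ is singular, there is a point $x\in\mathrm{Bs}(|\L|)$ at which every member of $|\L|$ — in particular all of $D_1,\dots,D_{n-3}$ — is singular.

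Finally I would derive the contradiction from embedding dimension. Write $D_i=\{f_i=0\}$ locally near $x$. Since $x\in\mathrm{Bs}(|\L|)\subseteq D_i$ for every $i$, we have $x\in Z$, and since each $D_i$ is singular at $x$ we have $df_i(x)=0$ for all $i$. Therefore the Zariski tangent space of $Z=\{f_1=\dots=f_{n-3}=0\}$ at $x$ is all of $T_xX$, so $Z$ has embedding dimension $n$ at $x$. As $n\geq 5$, this exceeds $4$, contradicting the fact that the singularities of $Z$ are compound Du Val. Hence the general $D\in|\L|$ is smooth.

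The crux is the middle step: promoting "the general member is singular" to "there is a single point at which all general members are singular," i.e.\ ruling out singular points that move with $D$. Everything else is the clean interplay between adjunction (forcing $\omega_Z$ trivial, hence the cDV classification and the embedding-dimension bound $4$) and the hypothesis $n\geq 5$ (forcing the embedding dimension at $x$ to be $\geq 5$). The transversality input that prevents moving singularities is exactly where characteristic zero and the base point freeness of $M$ are used.
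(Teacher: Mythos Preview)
There is a genuine gap in your final step. You assert that isolated canonical Gorenstein threefold singularities are compound Du Val, and hence have embedding dimension at most $4$. Reid's theorem says this for \emph{terminal} Gorenstein threefold singularities, not for canonical ones. A concrete counterexample is the affine cone over an anticanonically embedded del Pezzo surface of degree $4$, i.e.\ a general $(2,2)$ complete intersection in $\mathbb{A}^5$: this is an isolated Gorenstein canonical (but not terminal) threefold singularity---the exceptional divisor of the blow-up is the del Pezzo $S$ with $K_S=\mathcal O_S(-1)$, giving discrepancy $0$---and its embedding dimension at the vertex is $5$. It is even a local complete intersection, so it is not ruled out by the fact that your $Z$ is a complete intersection in the smooth $X$. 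Thus for $n=5$ your embedding-dimension contradiction does not go through; nothing prevents $Z$ from having precisely such a singularity at $x$.

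Your middle step is also under-argued: the ``transversality analysis'' does not by itself show that the singular locus of a general $D$ is a \emph{fixed} set. What does work (and is what the paper uses) is to note that for general $D$ one has $\Sing(D)\subset\mathrm{Bs}|\L|\subset Z$ and that any such singular point is a singular point of $Z$; since $\Sing(Z)$ is finite by Theorem~\ref{HVthm}, a covering argument on $|\L|$ then produces a fixed point $x$ at which every member is singular.

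The paper reaches the contradiction differently: it takes $n-2$ general divisors rather than $n-3$, uses Kawamata's result that $(Z_{n-3},Z_{n-2})$ is log canonical together with inversion of adjunction to conclude that $(X,\sum_{i=1}^{n-2}D_i)$ is log canonical near $Z_{n-3}$, and then blows up $x$ to compute the discrepancy of the exceptional divisor as $n-1-\sum a_i\le n-1-2(n-2)=3-n\le -2$. This violates the log canonical condition and does not rely on any classification of canonical threefold singularities.
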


\begin{proof}
We  use ideas from \cite[Theorem 1.6]{HV}. 
Let $k$ be an algebraically closed field of characteristic zero and let $X$ be a smooth Fano variety of dimension $n\geq 5$ and index $n-3$ over $k$, as in Theorem \ref{HVthm}.
In particular, $-K_X=\mathcal O_X(n-3)$ and $h^0(X,\L)\geq n-2$. 
Let $D_1, \ldots, D_{n-2}$ be general elements of the linear system $|\L|$ on $X$ and let 
$$
Z_i:=X\cap D_1\cap \dots \cap D_i .
$$
By Theorem \ref{HVthm}, $Z_{n-3}$ is a Gorenstein threefold with canonical singularities and trivial canonical bundle.
The pair $(Z_{n-3},Z_{n-2})$ is therefore log canonical by a result of Kawamata \cite[Proposition 4.2]{kawamata}.
Applying repeatedly inversion of adjunction (see e.g.\ \cite[Theorem 7.5]{kollar}), the pair $(X,\sum_{i=1}^{n-2}D_i)$ is thus seen to be log canonical near $Z_{n-3}$.

For a contradiction, we assume that every element in $|\L|$ is singular.  
In particular, each $D_i$ is singular.
By Bertini's theorem, the singular locus of $D_i$ must be contained in the base locus of $\L$ and hence in the singular locus of $Z_{n-3}$.
Since $Z_{n-3}$ has only isolated singularities by Theorem \ref{HVthm}, we find that $D_i$ has only isolated singularities as well.
Let $x$ be such a singular point.
Since for all $i$, the divisor $D_i$ is a general element of $|\L|$, $x$ is a singular point of $D_i$ for all $i$.
Let $\tau:X'\to X$ be the blow-up of $X$ in $x$ with exceptional divisor $E$.
Since $X$ is smooth, $K_{X'}=\tau^\ast K_X+(n-1)E$.
Since $X$ is smooth and $D_i$ is singular at $x$, we see that $\tau^\ast D_i=D_i'+a_iE$ for some $a_i\geq 2$, where $D_i'$ denotes the proper transform of $D_i$.
We thus find
$$
K_{X'}+\sum_{i=1}^{n-2} D_i'=\tau^\ast (K_{X}+\sum_{i=1}^{n-2}D_i)+(n-1)E-\sum_{i=1}^{n-2}a_iE .
$$
Since $n\geq 5$ and $a_i\geq 2$ for all $i$, the discrepancy of $E$ with respect to the pair $(X,\sum_{i=1}^{n-2} D_i)$ is thus given by
$$
n-1-\sum_{i=1}^{n-2}a_i \leq n-1-2(n-2)=-n+3\leq -2 .
$$
This shows that $(X,\sum_{i=1}^{n-2} D_i)$ is not log canonical near $Z_{n-3}$, a contradiction.
This concludes the proof of the proposition. 
\end{proof}

With the above proposition, we can prove the main theorem of this section, which reduces Theorem \ref{quintictheorem} to a question about threefolds and fourfolds.

\begin{theorem} \label{thm:reduction}
Suppose Theorem \ref{quintictheorem} holds in dimension $3$ and $4$. 
Then it holds in any dimension $\ge 3$.
\end{theorem}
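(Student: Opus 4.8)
The plan is to prove the statement by induction on $n$, taking dimensions $3$ and $4$ as the base cases (these are exactly the hypotheses) and reducing, for each $n\geq 5$, the $n$-dimensional case to the $(n-1)$-dimensional one. The mechanism is to cut $X$ down by a smooth member $D\in|\L|$, recognize $D$ as a quintic hypersurface by the inductive hypothesis, and then recover $X$ itself via Mori's Theorem~\ref{hyperplanesection}. The base cases $n=3,4$ cover those dimensions in the conclusion directly; the real content is the induction step, which is available only for $n\geq 5$, since that is the range in which a smooth element of $|\L|$ is guaranteed to exist.

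So fix $n\geq 5$ and assume Theorem~\ref{quintictheorem} in dimension $n-1$ (which holds either as the base case $n-1=4$ or by the inductive hypothesis when $n-1\geq 5$). From $K_X=\O_X(3-n)$ we get $-K_X=\O_X(n-3)$, so $X$ is a smooth Fano variety of index $n-3$. Kodaira vanishing in characteristic zero gives $h^0(X,\L)=\chi(X,\L)\geq n+2\geq n-2$, so the hypotheses of Theorem~\ref{HVthm} hold and Proposition~\ref{prop:HV} applies (this is precisely where $n\geq 5$ is used): a general element $D\in|\L|$ is smooth. Since $\L$ is ample and $\chi(X,\O_X)=1$ on the Fano $X$, such a $D$ is moreover connected, hence a smooth projective variety of dimension $n-1\geq 4$ over $k$.

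Next I would check that $(D,\L|_D)$ satisfies the hypotheses of Theorem~\ref{quintictheorem} in dimension $n-1$. As $D$ is a smooth ample divisor with $\dim D=n-1\geq 3$, the Grothendieck--Lefschetz theorem gives an isomorphism $\Pic X\xrightarrow{\sim}\Pic D$ by restriction; since $\L|_D$ is ample, its class is non-torsion modulo numerical equivalence, and this forces $\Pic D/\sim_{num}=\ZZ[\L|_D]$. The intersection number descends as $(\L|_D)^{n-1}=\L^n=5$, and adjunction gives $K_D=(K_X+\L)|_D=\O_D(4-n)=\O_D(3-(n-1))$. Finally, the exact sequence
\[
0\longrightarrow \O_X\longrightarrow \L\longrightarrow \L|_D\longrightarrow 0
\]
together with $\chi(X,\O_X)=1$ yields $\chi(D,\L|_D)=\chi(X,\L)-1\geq n+1=(n-1)+2$. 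Thus all numerical hypotheses hold in dimension $n-1$, and by the inductive hypothesis $D$ is isomorphic to a quintic hypersurface in $\CP^n_k$, the embedding being given by the complete linear system $|\L|_D|$.

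It then remains to transfer this back to $X$. Applying Mori's Theorem~\ref{hyperplanesection} with $Z=D$ --- whose global sections of $\L|_D$ embed it as a hypersurface in $\CP^n_k$ --- shows $X$ is isomorphic to a hypersurface in $\CP^{n+1}_k$ on which $\L$ is the pullback of $\O_{\CP^{n+1}_k}(1)$; its degree equals $\L^n=5$, so $X$ is a quintic hypersurface, completing the induction. I expect the only genuine obstacle to sit outside this clean induction, in the existence of the smooth section $D$: Proposition~\ref{prop:HV}, and hence this reduction, fails for $n=4$, which is exactly why dimensions $3$ and $4$ cannot be reduced further and must be supplied as hypotheses. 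Within the induction step itself, the points demanding care are the descent of the Picard-group and Euler-characteristic conditions to $D$ and the verification that the embedding produced by the inductive hypothesis is the complete system $|\L|_D|$ required by Mori's theorem.
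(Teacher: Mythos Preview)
Your proof is correct and follows essentially the same route as the paper: induct on $n$, use Proposition~\ref{prop:HV} for $n\geq 5$ to obtain a smooth $D\in|\L|$, verify $D$ satisfies the hypotheses of Theorem~\ref{quintictheorem} in dimension $n-1$, and conclude via Theorem~\ref{hyperplanesection}. You are in fact slightly more careful than the paper in explicitly invoking Grothendieck--Lefschetz for $D\subset X$ to check the Picard condition on $D$ before applying the inductive hypothesis; conversely, the one point you flag but do not fully argue---that the embedding of $D$ as a quintic is given by $|\L|_D|$---is handled in the paper by applying the Lefschetz hyperplane theorem to the quintic $D\subset\CP^n$ to see that $\O_D(1)$ must be the hyperplane bundle.
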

\begin{proof}
We prove the theorem by induction on the dimension.
Let $X$ be as in Theorem \ref{quintictheorem} and assume that $n=\dim X\geq 5$ and that we know Theorem \ref{quintictheorem} in lower dimensions.
By Kodaira vanishing, $\chi(X,\mathcal O_X(1))=h^0(X,\mathcal O_X(1))\geq n+2$.
Hence, Proposition \ref{prop:HV} implies that a general element $D\in |\L|$ is smooth.
If $\mathcal O_D(1)$ denotes the restriction of $\mathcal O_X(1)$ to $D$, then we have
$$
\mathcal O _D(1)^{n-1}=5,\ \ \chi(D,\mathcal O_D(1))=h^0(D,\mathcal O_D(1))\geq n+1\ \ \text{and}\ \ K_D=\mathcal O_D(4-n).
$$
Hence, $D$ satisfies the assumptions of Theorem \ref{quintictheorem} and so it is isomorphic to a quintic hypersurface by assumption.
But then by the Lefschetz hyperplane theorem, $\Pic D$ is generated by a line bundle whose self-intersection is 5 and so we find that $\mathcal O_D(1)$ must be the hyperplane bundle.
It thus follows from Theorem \ref{hyperplanesection} that the global sections of $\mathcal O_X(1)$ embed $X$ as a quintic hypersurface in $\CP^{n+1}_k$, as we want.
This concludes the proof of the theorem.
\end{proof}

\section{Quintic threefolds} 
In this section we aim to prove Theorem \ref{quintictheorem} in dimension three.
In fact, thanks to resolution of singularities in dimension three \cite[p.\ 1839]{CP}, which ensures the existence of the diagram (\ref{eq:diagr}), we are able to settle the case where the ground field has arbitrary characteristic.

\begin{theorem} \label{thm:n=3,d=5}
Let $k$ be an algebraically closed field (of arbitrary characteristic).
Let $X$ be a smooth projective threefold with $\Pic X/\sim_{num}=\ZZ [\L]$ for an ample line bundle $\L$ with 
$$
\L^3=5,\ \ h^0(X,\L)\geq 5\ \ \text{and}\ \ K_X=\mathcal O_X.
$$
Then $X$ is isomorphic to a quintic hypersurface in $\CP^{4}_k$.
\end{theorem}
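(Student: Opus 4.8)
The plan is to prove that the ample generator $\L=\O_X(1)$ is base point free: since $d=5$ is prime, Lemma~\ref{lem:H^n=d} then identifies $X$ with a quintic hypersurface and $\phi$ with an isomorphism. By Lemma~\ref{lem:Hodgeindex}(\ref{item:M^n=d}) this is equivalent to showing $M^n=M^3=5$, so I will exclude every smaller value of $M^3$. I organize the argument by $\dim Y$, using that $\dim Y\in\{2,3\}$ (Lemma~\ref{lem:dimY=1}), that $M^3>0$ exactly when $\dim Y=3$, and that in the latter case $M=q^*\O_Y(1)$ gives the factorization $M^3=\deg Y\cdot\deg\phi$ with $\deg Y\ge n+2-\dim Y=2$ (Lemma~\ref{lem:B,degY}).

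Suppose first that $\dim Y=3$, so $\phi$ is generically finite. If $\deg\phi=1$, then $\phi$ is birational and, since $d=5\ge n+2$, Proposition~\ref{prop:degq=1} shows $\phi$ is an isomorphism; hence $X$ is a quintic hypersurface and we are done. If $\deg\phi\ge 2$, then $M^3=\deg Y\cdot\deg\phi\le 5$ with both factors $\ge 2$ forces $\deg Y=\deg\phi=2$ and $M^3=4$. As $M^3=d-1$, Lemma~\ref{lem:Hodgeindex}(\ref{item:M^n=d-1}) gives $\dim B=0$, and the inequality chain \eqref{ineqs} together with Lemma~\ref{lem:L^bM^n-b-1F} pins down $M^2F=1$ (the remaining gap terms vanishing). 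Writing $F=\sum f_iE_i$ over the $p$-exceptional divisors, $\sum f_i(M^2E_i)=1$ isolates a unique $E_1$ with $f_1=1$ and $M^2E_1=1$; by the projection formula $M^2E_1=\deg(q|_{E_1})\cdot\deg\Sigma$ where $\Sigma=q(E_1)$, so $\Sigma$ is a degree-one surface, i.e.\ a plane $\PP^2\subset Y$. But $Y$ is a quadric threefold: by Lemma~\ref{lem:decomp} it cannot be a cone (which is covered by planes), so $Y$ is a smooth quadric, and a smooth quadric threefold contains no plane. This contradiction excludes $\deg\phi\ge 2$, and hence settles the case $\dim Y=3$. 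This part is clean and works in arbitrary characteristic.

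The main obstacle is the case $\dim Y=2$, i.e.\ $M^3=0$, where $q\colon W\to Y$ is a fibration over a surface. Here $M^2=\deg Y\cdot[C_0]$ for the class of a general fibre $C_0$, and \eqref{ineqs} gives $LM^2=\deg Y\cdot(\O_X(1)\cdot p_*C_0)\le 5$ with $\deg Y\ge 3$, forcing $\O_X(1)\cdot p_*C_0=1$. Thus $X$ is swept out by a two-dimensional family of curves $C_0'=p(C_0)$ of $\O_X(1)$-degree $1$. I would derive a contradiction on a general hyperplane section $D\in|\L|$: adjunction gives $K_D=(K_X+\L)|_D=\O_D(1)$ (using $K_X=\O_X$), and since $\phi|_D$ is the restriction of $\phi$, its image is the hyperplane section $Y\cap H$, a curve. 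Hence $D$ is fibred over a curve with general fibre $C_0'$ satisfying $\O_D(1)\cdot C_0'=1$ and $C_0'^2=0$, and adjunction on $D$ yields $2p_a(C_0')-2=(K_D+C_0')\cdot C_0'=\O_D(1)\cdot C_0'=1$, which is impossible by parity.

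The delicate point, and the crux of this case, is that $D$ is singular along the base locus of $\L$ (through which every $C_0'$ passes), so the clean adjunction must be justified on a suitable smooth model; tracing discrepancies shows the parity obstruction survives precisely when the general $D$ has canonical (crepant) singularities, so the real work is to establish this mild behaviour. In characteristic zero this is where Bertini's theorem and the surface theory behind Horikawa's classification enter, while in arbitrary characteristic one leans on the dimension-three resolution guaranteeing \eqref{eq:diagr} and on a direct intersection-theoretic analysis of $F$ and of the discrepancy of the divisor $E_1$ dominating $Y$. Once $\dim Y=2$ is ruled out, the case $\dim Y=3$ forces $\phi$ to be an isomorphism, and $X$ is a quintic hypersurface in $\CP^4_k$.
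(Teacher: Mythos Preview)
Your treatment of $\dim Y=3$ is correct and in fact slightly cleaner than the paper's: where the paper excludes the smooth-quadric case by observing that $H^2$ is divisible by $2$ in $\CH^2(Y)$ (so $\L^2=p_*q^*H^2$ would be as well, contradicting $\L^3=5$), you instead produce a plane $\Sigma=q(E_1)\subset Y$ from $M^2E_1=1$ and note that a smooth quadric threefold contains no planes. Both arguments work.

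The genuine gap is in $\dim Y=2$, and it is more than a technicality about singularities of $D$. On $D$ the curves $C_0'=p_*C_0$ all pass through the base locus $B\subset D$, so two general $C_0'$ meet there and $(C_0')^2\neq 0$; the equation $(K_D+C_0')\cdot C_0'=1$ simply fails. Passing instead to the smooth model $D'\in|M|$ does give $C_0^2=0$, but then $K_{D'}=(K_W+M)|_{D'}$ with $K_W=\sum a_iE_i$, so $K_{D'}\cdot C_0=\sum a_i(E_i\cdot C_0)$. From $F\cdot C_0=L\cdot C_0-M\cdot C_0=1$ you extract a unique $E_1$ with $E_1\cdot C_0=1$ and obtain $K_{D'}\cdot C_0=a_1$. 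Adjunction now reads $a_1=2g(C_0)-2$, which only says that $a_1$ is even---a constraint on the resolution, not a contradiction. Your hoped-for endgame (``crepant singularities'') would force $a_1=1$, the wrong parity, so that route cannot close; and in positive characteristic there is no Bertini input to substitute.

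The paper handles $\dim Y=2$ by a completely different, direct case analysis on $\dim B$, with no adjunction at all. When $\dim B=0$, one uses $\L\cdot p_*C=1$ to show that the intersection $S_1\cap S_2$ of two general members of $|\L|$ is \emph{smooth} at a chosen base point $x$, whereas the component $F_0\subset F$ dominating $Y$ forces this intersection to have multiplicity $\ge M^2F_0\ge\deg Y\ge 3$ at $x$; contradiction. When $\dim B=1$, the identity $\L^2=\deg Y\cdot p_*C+B$ combined with $\L\cdot B>0$ pins down $3\le\deg Y\le 4$; one then runs through the classification of non-degenerate surfaces of degree $3$ and $4$ in $\PP^4$ and checks that in every case a hyperplane section of $Y$ decomposes into two movable pieces, contradicting Lemma~\ref{lem:decomp}. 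You should replace your $\dim Y=2$ sketch with this argument.
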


To prove the above theorem, we consider the rational map $\phi: X\dashrightarrow Y\subset \PP^4$ given by five general sections of $\L$ and use the notation from Section \ref{subsec:notation}. 
By Lemmas \ref{lem:B,degY} and \ref{lem:dimY=1}, $\dim Y\in \{2,3\}$ and $\dim B\in \{0,1\}$.
We consider these cases in what follows.

\subsection{$\dim Y=2$ and $\dim B=0$} \label{subsec:dimY=2,dimB=0}

Let $C$ be a general fiber of $q$; so $C$ is a curve. Note that
$$
5=L^3=LM^2+LMF+L^2F=LM^2=(\deg Y)( \L\cdot p_*C)
$$
The surface $Y$ has degree $\ge 3$ by Lemma \ref{lem:B,degY}. 
So we must have that $\L\cdot p_*C =1$.

By Lemma \ref{lem:L^bM^n-b-1F}, $M^2F>0$ and so there is a component $F_0\subset F$ with $M^2F_0>0$.
We let $x\in X$ be the image of $F_0$ in $X$ ($F_0$ maps to $B$ and hence to a point in $X$ by Lemma \ref{lem:B,degY}). 
Note also that $x$ is a base point of $\L$.

Since $\L\cdot p_*C =1$, we see that for any general element $S\in |\L|$, there is a curve in $X$ which intersects $S$ with multiplicity one in $x$.
Hence $S$ is smooth at $x$. 
Let now $S_1,S_2\in |\L|$ be general elements.
Since $S_1$ is general, there is a general fibre $C_1$ of $q$ with $p_\ast C_1\subset S_1$.
Moreover, as we have seen above, $x$ is a smooth point of $S_1$.
Since $\L\cdot p_\ast C_1=1$, we then find that the curves $p_\ast C_1$ and $S_1\cap S_2$ meet with multiplicity one at the smooth point $x\in S_1$.
This shows that $T=S_1\cap S_2$ is a curve in $X$ with multiplicity one at $x$. 
However, since $\phi$ is given by $5$ general sections of $\L$, $T$ can be written as $p_\ast (q^\ast H\cap q^\ast H')$ for general hyperplanes $H,H'\in |\mathcal O_{\CP^4}(1)|$.
Since the component $F_0$ of $F$ lies above $x$ and dominates $Y$, we find that $T$ must have multiplicity at least $M^2 F_0\geq  \deg(Y)\geq 3$ at $x$, a contradiction.

\subsection{$\dim Y=2$ and $\dim B=1$}\label{sect321} 
With notation as in the previous case, we can write
$$
\L^2=(\deg Y) p_* C+B .
$$ 
Intersecting both sides with $\L$, we find that $3\le \deg Y\le 4$, because $\L\cdot B>0$, since $\dim B=1$ and $\L$ is ample.

If $\deg Y=3$, then $Y$ is a (possibly singular) cubic scroll.
Hence, $Y$ is covered by lines and so we conclude by Lemma \ref{lem:decomp}.

If $\deg Y=4$, we use the classification of surfaces of degree $4$ in $\PP^{4}$, see \cite{SD} or \cite[Theorem 8]{nagata}. This implies that $Y$ is either
\begin{enumerate}[(i)]
\item A projection of a quartic scroll in $\PP^5$ to $\PP^4$;
\item A projection of the Veronese surface to $\PP^4$;
\item An intersection of two quadrics; or 
\item A cone over a quartic curve in $\PP^3$.
\end{enumerate}

In cases (i) and (iv), $Y$ is a surface ruled by lines and we conclude by Lemma \ref{lem:decomp}. 

Similarly, in (ii), let $V\subset \PP^5$ be the Veronese surface and let $p: V\dashrightarrow Y$ be the projection. 
Recall that $V\simeq \PP^2$ embedded by the complete linear system $|\O_{\PP^2}(2)|$. 
Hence, there is a linear series $W\subset |\mathcal O_{\CP^2}(2)|$ of codimension one such that the induced map $\phi_W:\CP^2\to \CP^4$ has $Y$ as image.  
Consider the subspace $W'\subset |\mathcal O_{\CP^2}(2)|$ that is given by decomposable divisors, i.e.\ by sums of two lines.
Since $W$ is a linear subspace of codimension one and since each line on $\CP^2$ moves in a $2$-dimensional linear series, we conclude that $W\cap W'$ contains at least a one-dimensional family of decomposable divisors where both components move.
Since the hyperplane sections of $Y$ are exactly given by the elements of $W$, this contradicts  Lemma \ref{lem:decomp}, as we want.

In case (iii), we use the following well-known characterization of quadrics over algebraically closed fields of arbitrary characteristic.

\begin{lemma} \label{lem:quadric}
Let $k$ be an algebraically closed field and let $Q\subset \CP^{n+1}_k$ be a quadric hypersurface.
Assume that $X$ is integral.
Then $Q$ is the cone over a smooth quadric $Q_0$. 
Moreover, if $Q$ is smooth, then there are homogeneous coordinates $x_0,\dots ,x_{n+1}$, such that $Q$ is given by $\sum_{i=0}^{m-1} x_ix_{i+m}=0$ if $n+2=2m$ for some integer $m$, or $\sum_{i=0}^{m-1} x_ix_{i+m}+x_{n+1}^2=0$ if $n+2=2m+1$ for some integer $m$. 
\end{lemma}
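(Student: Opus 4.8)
The plan is to recast everything in terms of the quadratic form defining $Q$ and to argue in a characteristic-free way. Over the algebraically closed field $k$, write $Q=\{q=0\}\subset \CP(V)$ for $V=k^{n+2}$, where $q$ is a nonzero quadratic form determined up to scalar. Introduce the polar bilinear form $b_q(x,y)=q(x+y)-q(x)-q(y)$ and set
$$
R=\{v\in V:\ q(v)=0\ \text{ and }\ b_q(v,-)\equiv 0\}.
$$
The crucial point, and the reason for not simply taking the radical of $b_q$, is characteristic $2$: there $b_q$ is alternating (since $b_q(v,v)=2q(v)=0$), so its radical can contain anisotropic vectors; only the extra condition $q(v)=0$ cuts out the genuine vertex. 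A routine computation of the partials of $q$ shows that $\nabla q(v)=0$ is equivalent to $b_q(v,-)\equiv 0$ in every characteristic, so that $\Sing(Q)=\CP(R)$.

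For the first assertion (cone structure), I would note that $q(v+r)=q(v)$ for all $r\in R$, so $q$ descends to a quadratic form $\bar q$ on $V/R$. Then $Q$ is precisely the cone with vertex $\CP(R)$ over $Q_0=\{\bar q=0\}\subset\CP(V/R)$. A direct check shows the radical of $\bar q$ is trivial (any lift of a radical vector of $\bar q$ lands in $R$), so $Q_0$ is smooth. Integrality of $Q$ — equivalently, that $q$ does not factor into two linear forms, i.e.\ $\dim V/R\geq 3$ — guarantees that $Q_0$ is a smooth quadric of positive dimension.

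For the normal form I may now assume $Q$ itself is smooth, i.e.\ $R=0$, and show $q$ splits into hyperbolic planes by induction on $\dim V$. If $\dim V\geq 2$, the quadric $\{q=0\}$ is a nonempty hypersurface in $\CP(V)$ and hence carries a $k$-point, which is where algebraic closedness enters; this yields a nonzero isotropic vector $v$. Since $R=0$ forces $b_q(v,-)\not\equiv 0$, I pick $w$ with $b_q(v,w)=1$ and replace $w$ by $w'=w-q(w)v$, so that $q(w')=0$ and $b_q(v,w')=1$; this correction works in all characteristics precisely because $b_q(v,v)=2q(v)=0$, sidestepping the usual diagonalization of symmetric forms. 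Then $H=\langle v,w'\rangle$ is a hyperbolic plane, $V=H\perp H^\perp$, and one verifies that $q|_{H^\perp}$ again has trivial radical, so the induction continues on $H^\perp$.

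The induction terminates either at dimension $0$, giving $q\cong\sum_{i=0}^{m-1}x_ix_{i+m}$ with $n+2=2m$, or at a one-dimensional anisotropic remainder $\langle a\rangle$ with $q=ax^2$ on the last line and $n+2=2m+1$; rescaling that coordinate by a square root of $a^{-1}$ (available since $k$ is algebraically closed, hence perfect) turns $ax^2$ into $x^2$ and produces $\sum_{i=0}^{m-1}x_ix_{i+m}+x_{n+1}^2$. This is exactly the asserted normal form. The main obstacle throughout is characteristic $2$: there $b_q$ is alternating and therefore degenerate whenever $\dim V$ is odd, even though $Q$ is smooth, so the classical diagonalization argument breaks down. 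The proof is arranged to be uniform — the vertex is defined via $R=\{q=0\}\cap\{b_q=0\}$, and the hyperbolic splitting uses the substitution $w\mapsto w-q(w)v$ — and the one computation deserving genuine care is checking that the radical stays trivial on $H^\perp$ at each inductive step.
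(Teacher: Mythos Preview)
The paper does not actually prove this lemma: it is introduced as ``the following well-known characterization of quadrics over algebraically closed fields of arbitrary characteristic'' and stated without proof, so there is no argument to compare against. Your proposal supplies a complete, characteristic-free proof, and the argument is correct.

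A few confirmations on the points that matter most. Your identification $\Sing(Q)=\CP(R)$ is right: the computation $b_q(v,e_k)=\partial_k q(v)$ holds in every characteristic, and in characteristic $2$ the Euler relation $\sum x_i\partial_i q=2q$ vanishes, so the extra condition $q(v)=0$ in the definition of $R$ is genuinely needed there. The descent of $q$ to $V/R$ and the triviality of the radical of $\bar q$ are immediate, and integrality of $Q$ indeed forces $\dim V/R\geq 3$ (the cases $\dim V/R\le 2$ give a double hyperplane or a union of two hyperplanes over $\overline k$), so $Q_0$ is a smooth positive-dimensional quadric as claimed. In the hyperbolic-splitting step, the key point you flag --- that a nonzero \emph{isotropic} $v$ necessarily has $b_q(v,-)\not\equiv 0$ when $R=0$ --- is exactly what makes the induction go through in characteristic $2$, where the radical of $b_q$ alone can be nonzero even for a smooth quadric of odd $n$. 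The correction $w'=w-q(w)v$ and the verification that $q|_{H^\perp}$ again has trivial radical are straightforward, and the terminal rescaling of the anisotropic line uses only that $k$ is algebraically closed (hence perfect). Nothing is missing.
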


In the setting of the above lemma, we will say as usual that $Q$ has rank $r$ if $Q$ is a cone over a smooth quadric of dimension $r-2$.

To settle case (iii), let now $Y$ be the intersection of two quadrics $Q_1,Q_2$.
We note first that the pencil spanned by $Q_1$ and $Q_2$ contains a singular quadric $Q_0$. 
In particular, $Q_0$ contains a $1$-dimensional family of planes $P_t$. 
Each plane $P_t$ must intersect $Y$ in a conic curve $C_t$, and each $C_t$ lies in a pencil of hyperplane sections $H_t$ so that $H_t=C_t+C_t'$ for a residual conic curve $C_t'$. 
If the residual conic $C_t'$ moves on $Y$, then we are done via Lemma \ref{lem:decomp}.
Otherwise, $C'_t$ must be the base locus of the family of planes $P_t\subset Q_0$ and so $Q_0$ has rank three and $C'_t$ is given by the singular line $\ell$ of $Q_0$ (counted with multiplicity two).  
Since $Q_0$ has rank three, intersecting it with a general hyperplane $H$ containing $\ell$ gives a quadric of rank two. In particular, $H|_Y$ decomposes into a union of $\ell$ (with multiplicity two) and two lines. Hence $Y$ is ruled by lines, and we are again done by Lemma \ref{lem:decomp}.

\subsection{$\dim Y=3$} \label{subsec:dimY=3}
When $Y$ is a threefold in $\PP^4$, we note that 
\begin{equation}\label{ineqs3}
5=L^3 = L^2 M \ge L M^2\ge M^3=(\deg q)(\deg Y) .
\end{equation}
Once again, $\deg Y\ge 2$ by Lemma \ref{lem:B,degY}. 
Also we may assume $\deg q\ge 2$ by Proposition \ref{prop:degq=1}. 
This leaves the case $(\deg Y,\deg q)=(2,2)$. 
In particular, $M^3=4$ and so Lemma \ref{lem:Hodgeindex} shows that the base locus $B$ consists of a single point.\footnote{
If $\operatorname{char} k=0$, a general section of $\L$ is smooth by Lemma \ref{lem:Hodgeindex}, but we cannot use this to conclude inductively, because because of the Horikawa examples in dimension 2 \cite{horikawa}. 
}

Since $\deg Y=2$, $Y$ is a quadric. 
Since $Y$ is integral, it must either be smooth, or the cone over a smooth conic, or the cone over a smooth quadric surface, cf.\  Lemma \ref{lem:quadric}. 

If $Y$ is the cone over a smooth conic in $\PP^2$, then it is ruled by $\PP^2$'s and we conclude by Lemma \ref{lem:decomp}.

If $Y$ is the cone over a smooth quadric surface $Q\subset \CP^3$, then the union of two lines in different rulings of $Q$ yields a hyperplane section of $Q$ with two movable components.
Taking the cone over this divisor, we find a hyperplane section of $Y$ which has two movable components and so we get a contradiction using Lemma \ref{lem:decomp}. 

If $Y$ is a smooth quadric, the class $H^2$ is divisible by two in $\CH^2(Y)$.
Since $\dim(B)=0$, we have $\L^2=p_\ast q^* H^2$ on $X$, and so we conclude that this class is divisible by two as well, which contradicts the fact that $\L^3=5$.

This concludes the proof of Theorem \ref{thm:n=3,d=5}.

\section{Quintic fourfolds}
We now prove Theorem \ref{quintictheorem} for fourfolds (over an algebraically closed field $k$ of characteristic zero). 
The strategy of proof follows the argument for the three-dimensional case, but some additional difficulties arise in dimension four.  
We use the notation of Section \ref{subsec:notation} and consider the diagram
$$\begin{tikzcd}
 & W \arrow[ld, "p"'] \arrow[rd, "q"] &  \\
X \arrow[rr, "\phi", dashed] &  & Y \subset \PP^5
\end{tikzcd}$$
By Lemmas \ref{lem:B,degY}, \ref{lem:dimY=1} and \ref{lem:Hodgeindex}, we have  
$$
\dim Y\in \{2,3,4\},\ \  6-\dim Y \leq \deg Y\leq 5\ \ \text{and}\ \  \dim B\in \{0,1,2\} .
$$
We deal with these cases in what follows.

\subsection{$\dim Y=2$ and $\deg Y=4$}

In this case $Y$ is a surface of minimal degree in $\CP^5$ and so, using\cite{EH}, we see that it is either a cone over a rational normal curve in $\CP^4$, the Veronese surface, $\CP^1\times \CP^1$ (embedded via $\mathcal O(1,1)$), or a quartic scroll.
In each case, the hyperplane divisor $H$ can be written as the sum of two movable divisors on $Y$, which once again contradicts Lemma \ref{lem:decomp}.

\subsection{$\dim Y=2$ and $\deg Y=5$}\label{dimY2degY5}

Let $S$ be a general fibre of $q$.
Since $\dim B\leq 2$ by Lemma \ref{lem:B,degY}, 
$$
\L^2=\deg(Y)p_\ast S+R
$$
for an effective codimension two class $R$. 
Since $\deg(Y)=5$, $\L^4=5$ and $\L$ is ample, we conclude $R=0$.
Hence, $\L^2=5p_\ast S$ and so $5=\L^4=25(p_\ast S)^2$, which is a contradiction.

\subsection{$\dim Y=3$ and $\deg Y=3$}

In this case $Y$ is a threefold of minimal degree in $\CP^5$.
If it is smooth, then it is the Segre cubic threefold $\CP^1\times \CP^2$.
If $Y$ is singular, then it is a cone over a surface of minimal degree in $\CP^4$, which is ruled by lines, cf.\ Section \ref{sect321}.
In either case $Y$ is covered by 2-planes and we get a contradiction from Lemma \ref{lem:decomp}.

\subsection{$\dim Y=3$ and $\dim B=0$} \label{subsec:dimY=3,dimB=0}

Let $C$ be a general fibre of $q$ (so $C$ is a smooth curve). 
Since the base locus $B$ is assumed to be zero-dimensional, we have $\L^3=\deg(Y)p_\ast C$ and so $\deg(Y)=5$ and $\L \cdot p_\ast C=1$.
The same argument as in the case of threefolds now applies, see Section \ref{subsec:dimY=2,dimB=0}.

\subsection{$\dim Y =3$ and $\deg Y=5$}

Let $C$ be a general fibre of $q$.
Since $\dim B\leq 2$ by Lemma \ref{lem:B,degY}, we have $L^3F=0$ and so
$$
5=L^4=LM^3+LM^2F+L^2MF.
$$
Here we have $LM^3=\deg(Y)(\L \cdot p_\ast C)\geq 5$, because $\L$ is ample, $p_\ast C\neq 0$ (as $C$ is a general fibre of $q$) and $\deg Y= 5$ by our assumptions.
Since the remaining terms are non-negative by Lemma \ref{lem:L^bM^n-b-1F}, we find $LM^2F=0$ and $L^2MF=0$.
By Lemmas \ref{lem:B,degY} and \ref{lem:L^bM^n-b-1F}, this implies $\dim B=0$ and so we conclude via the case treated in Section \ref{subsec:dimY=3,dimB=0}.

\subsection{$\dim Y=3$, $\deg Y=4$ and $\dim B=2$}

As in the previous case, we find
$$
5=L^4=LM^3+LM^2F + L^2MF .
$$
Moreover, $LM^3=\deg Y\cdot (\L \cdot p_\ast C)$, where $C$ is a general fibre of $q$.
Since $\deg Y=4$, we thus get
$$
LM^3=4 .
$$
Hence, $LM^2F + L^2MF =1$ and so Lemma \ref{lem:L^bM^n-b-1F} shows
$$
LM^2F=0\ \ \text{and}\ \ L^2MF=1 ,
$$
because $\dim B=2$.

Let now $S$ be the intersection of a general element of $|M|$  with a general element of $|rL|$ for some $r\gg0$.
Then $S$ is a smooth surface on $W$.
We let $f$, $m$ and $l$ denote the restrictions of $F$, $M$ and $L$ to $S$.
Then we have
$$
m^2=rLM^3=4r,\ \ lf=rL^2MF=r \ \ \text{and}\ \ mf=rLM^2F=0.
$$
Since $m$ is a nef divisor on $S$, $m^2=4r$ shows that it is big and nef.
Moreover, $f$ is an effective divisor on $S$ and we have $mf=0$ and $f^2=(l-m)f=lf=r>0$, which contradicts  the Hodge index theorem on $S$.
This concludes the present case. 

\subsection{$\dim Y=3$, $\deg Y=4$ and $\dim B=1$} \label{subsec:dimY=3,degY=4,dimB=1}

Since $Y$ is a threefold, $M^4=0$.
Moreover, $L^2F=0$ because $\dim B=1$.
We thus find $5=L^4=FM^3+LM^2F$. 
Here, $FM^3=LM^3=\deg Y\cdot (\L \cdot p_\ast C)$, where $C$ is a general fibre of $q$.
Hence, 
$$
FM^3=4,\ \ \L \cdot p_\ast C=1\ \ \text{and} \ \ LM^2F=1 .
$$ 
Since $M^3=4C$, we have $\L^3=4 p_*C+B$.
Since $\L^4=5$, we conclude that $\L\cdot B=1$ and so $B$ is given by an integral curve on $X$.

We write $F=F'+F''$, where $F'$ denotes the union of all components that map to points on $X$, while $F''$ denotes the union of all components that dominate $B$.  
Then, $LM^2F=LM^2F''=1$. 
Take a smooth divisor $D\in |\O_X(m)|$ for $m\gg 0$ so that the intersection $D\cap B$ is transversal, i.e. consists of $m$ distinct points. 
Then on $W$, the 2-cycle $p^\ast D\cdot F$ is represented by $m$ fibers of $F''\to B$. 
In particular, this shows that the class $LF$ is represented by an effective cycle, namely the fiber over a general point of $B$. 
Since $LM^2F=1$, we find that this cycle is mapped to a plane in $Y\subset \CP^5$; that is, there is exactly one component of $LF$ which is not contracted via $q_\ast$ and this component is mapped to a plane in $Y$. 
Since $Y$ is a threefold, this plane cannot move on $Y$ (by Lemma \ref{lem:decomp}). 
It follows that none of the components of $F''$ dominate $Y$. 
Hence, if $F_0\subset F$ is a component which dominates $Y$, then $F_0\subset F'$, i.e. $F_0$ maps to a point in $X$.
Since $M^3=\deg(Y)C=4C$ on $W$, the equality $FM^3=4$ shows that there is a unique such component $F_0$ and we denote its image in $X$ by $x\in X$.

Let $D\in |\L|$ be a general element. 
Since $\L\cdot p_\ast C=1$, $D$ must be smooth at $x$.
Let $S\in |\L|_D|$ be the intersection of $D$ with another general element of $|\L|$.
Then, as in Section \ref{subsec:dimY=2,dimB=0}, the fact that $\L \cdot p_\ast C=1$ implies that $S$ is smooth at $x$.
Finally, let $K\in |\L|_S|$ be the intersection of $S$ with a third general element of $|\L|$.
Then, as before, $\L\cdot p_\ast C=1$ implies that $K$ is smooth at $x$.
On the other hand, $K=B+4p_\ast C$ has multiplicity at least $4$ at $x$ and so it is not smooth at $x$.
This contradiction establishes the case $\dim Y=3$, $\deg Y=4$ and $\dim B=1$, as we want.

\subsection{$\dim Y=4$} \label{subsec:dimY=4}

By Theorem \ref{HVthm}, a general element $D$ of $|\L|$ is a canonical threefold with trivial canonical bundle.
Let $D'\in |M|$ be the proper transform of $D$.
Then $D'$ is a resolution of $D$ and so $h^0(D',K_{D'})=h^0(D,K_D)=1$, because $D$ has canonical singularities. 
Applying \cite[Proposition 2.1]{Kob} to $D'$, we find that 
$$
(M|_{D'})^3\geq 2h^0(D',M|_{D'})-6.
$$
Since $h^0(W,M)=h^0(X,\L)\geq 6$ by assumption, we have $h^0(D',M|_{D'})\geq 5$ and so the above inequality shows
$M^4\geq 4$. 
Hence, the generic element $D\in |\L|$ is smooth by Lemma \ref{lem:Hodgeindex}. 
Finally, by the three-dimensional case, $D$ is a quintic threefold, and so we conclude by Proposition \ref{hyperplanesection}.

This completes the proof of Theorem \ref{quintictheorem} in the case of fourfolds.
The general statement follows therefore from Theorem \ref{thm:reduction} and the three-dimensional case proven in Theorem \ref{thm:n=3,d=5} above.

\subsection{Proof of Corollary \ref{corprojdef}}

\begin{proof}[Proof of Corollary \ref{corprojdef}]
Let $X$ be a projective manifold of dimension $n\geq 3$ which deforms to a quintic hypersurface $Y\subset \CP^{n+1}_{\CC}$ via a sequence of K\"ahler deformations.
Then $b_2(X)=b_2(Y)=1$ and so any fibre in the above sequence of deformations is a projective manifold.
It follows easily that the ample generator $\mathcal O_X(1)$ of $\Pic X$ must deform to the ample generator $\mathcal O_Y(1)$ of $\Pic Y$.
In particular,
$$
\mathcal O_X(1)^n=\mathcal O_Y(1)^n=5.
$$
Moreover, since Euler characteristics are constant in flat families,
$$
\chi(X,\mathcal O_X(1))=\chi(Y,\mathcal O_Y(1))=n+2.
$$
Finally, $K_X$ deforms to $K_Y$ and so $K_X=\mathcal O_X(-n+2)$.
Hence, Theorem \ref{quintictheorem} applies and we find that $X$ is isomorphic to a quintic hypersurface.
This proves the corollary.
\end{proof}

\begin{remark}
Corollary \ref{corprojdef} says that a complex projective manifold $X$ which deforms to a quintic hypersurface $Y$ via a sequence of K\"ahler deformations must itself be a quintic hypersurface. 
We do not know if the corresponding statement holds true if $X$ and $Y$ are only deformation equivalent as complex manifolds (and $n$ is even). 
Here one would have to rule out the situation where the ample generator of $\Pic X$ deforms to the anti-ample generator of $\Pic Y$. 
Since the top self-intersection of an ample line bundle is positive, this could only happen in the case where $n$ is even. 
\end{remark}

\section{Septic threefolds} \label{sec:n=3,d=7}

In this section, we prove the following theorem.

\begin{theorem} \label{thm:n=3,d=7}
Let $k$ be an algebraically closed field (of arbitrary characteristic) and let $X$ be a smooth  projective threefold over $k$ with $\Pic X/\sim_{num} =\ZZ [\L]$ for an ample line bundle $\L$ with
$$
\L^3=7,\ \ h^0(X,\L)\geq 5\ \ \text{and}\ \ K_X=\mathcal O_X(2) .
$$
Then $X$ is isomorphic to a septic threefold in $\CP^4_k$.
\end{theorem}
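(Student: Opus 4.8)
The plan is to imitate the proof of Theorem \ref{thm:n=3,d=5}. We resolve the rational map $\phi$ defined by $5$ general sections of $\L$, obtaining the diagram \eqref{eq:diagr}; this is legitimate since $n=3$. By Lemmas \ref{lem:B,degY} and \ref{lem:dimY=1} we have $\dim Y\in\{2,3\}$ and $\dim B\in\{0,1\}$. The decisive structural point is that $d=7\ge n+2=5$, so Proposition \ref{prop:degq=1} applies: as soon as $\phi$ is birational onto its image---equivalently $\dim Y=3$ and $\deg q=1$---it is an isomorphism and $X$ is a septic hypersurface. Hence it suffices to exclude (i) $\dim Y=2$ and (ii) $\dim Y=3$ with $\deg q\ge 2$.

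Case (i), $\dim Y=2$. Writing $\L^2=(\deg Y)\,p_\ast C+R$ for $C$ a general fibre of $q$ and $R$ the base-locus contribution, I intersect with the ample class $\L$. When $\dim B=0$ one has $R=0$ and $7=LM^2=(\deg Y)(\L\cdot p_\ast C)$; since $\deg Y\ge 3$ and $7$ is prime, this forces $\deg Y=7$ and $\L\cdot p_\ast C=1$, and then the multiplicity argument of Section \ref{subsec:dimY=2,dimB=0} applies verbatim: a general $S\in|\L|$ is smooth at the base point $x$, yet the curve $S_1\cap S_2$ must have multiplicity at least $M^2F_0\ge\deg Y=7$ there, a contradiction. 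When $\dim B=1$ the same intersection gives $3\le\deg Y\le 6$. Degrees $3$ and $4$ are dispatched exactly as in Section \ref{sect321} (rational normal scrolls, cones, projected Veronese surfaces, and $(2,2)$-complete intersections all violate Lemma \ref{lem:decomp}). The genuinely new work is in degrees $5$ and $6$, where one must use the classification of such surfaces in $\PP^4$ to exhibit a movable decomposition of a hyperplane section, again contradicting Lemma \ref{lem:decomp}.

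Case (ii), $\dim Y=3$ and $\deg q\ge 2$. By Lemma \ref{lem:Hodgeindex} we have $M^3=\deg q\cdot\deg Y\le 7$, and with $\deg Y\ge 2$ the only options are $(\deg q,\deg Y)\in\{(2,2),(3,2),(2,3)\}$, so $Y$ is an integral quadric or cubic threefold with $M^3\in\{4,6\}$. If $Y$ is a cone---the only possibility for a singular integral quadric by Lemma \ref{lem:quadric}, or, for a cubic, one covered by planes---then $Y$ is covered by codimension-one linear spaces and Lemma \ref{lem:decomp} gives a contradiction. If $Y$ is smooth, I use that $\CH^2(Y)$ modulo algebraic equivalence is generated by the class $\ell$ of a line, with $H^2\equiv(\deg Y)\ell$. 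In the two cases with $M^3=6=d-1$, Lemma \ref{lem:Hodgeindex} gives $\dim B=0$, so $\L^2=p_\ast M^2=p_\ast q^\ast(H^2)\equiv(\deg Y)\,p_\ast q^\ast\ell$ (using that $q^\ast$ is a ring homomorphism, as $Y$ is smooth); hence $\deg Y\mid \L^3=7$, which is impossible. In the remaining case $(\deg q,\deg Y)=(2,2)$ with $M^3=4$, the same divisibility settles $\dim B=0$, while for $\dim B=1$ one must combine the parity of $\L\cdot p_\ast M^2$ with the Hodge index inequality $M^3\cdot MF^2\le(M^2F)^2$ on a general surface $S\in|M|$ to constrain the base curve.

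The step I expect to be hardest is the cubic threefold $(\deg q,\deg Y)=(2,3)$, together with its non-cone singular degenerations. Here the adjunction method of Proposition \ref{prop:degq=1} breaks down completely---for a degree-$2$ cover $C\to C'$ of a plane cubic the Riemann--Hurwitz relation reduces to the identity $(K_W+2M)\cdot M^2=R_q\cdot M^2$ and yields no bound---so the argument rests entirely on the Chow-theoretic divisibility $H^2\equiv 3\ell$. Establishing this for a singular (say nodal) cubic threefold, which is not covered by planes and hence out of reach of Lemma \ref{lem:decomp}, will require passing to a resolution and checking that the line class still generates $\CH^2$ up to algebraic equivalence; controlling the base locus when $\deg q=2$ is the other delicate point.
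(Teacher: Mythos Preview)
Your overall case split matches the paper's, but the execution diverges in three places, two of which are genuine gaps.

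\textbf{$\dim Y=2$, $\deg Y\in\{5,6\}$.} Relying on a classification of such surfaces in $\PP^4$ is not viable: no clean list exists, and there is no reason a general such surface should have a hyperplane section splitting into two movable pieces. The paper avoids classification altogether. From $7=LM^2+LMF$ and $LM^2=\deg Y$ one gets $LMF=7-\deg Y\le 2$, so the effective $1$-cycle $LF$ maps under $q$ to a curve $K\subset Y$ of degree $LMF\le 2$, i.e.\ a line or a conic. Such a curve lies in at least a pencil of hyperplanes of $\PP^4$, so the residual $K'=H-K$ moves on $Y$; if $K$ also moves, Lemma~\ref{lem:decomp} finishes. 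If $K$ does not move, then every component of $F$ dominating $Y$ must contract to a point of $X$, and the multiplicity argument of Section~\ref{subsec:dimY=2,dimB=0} goes through.

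\textbf{The cubic threefold, $M^3=6$.} Your divisibility argument via $H^2\equiv 3\ell$ in $\CH^2(Y)/{\sim}_{\mathrm{alg}}$ is a genuinely different idea and, for smooth $Y$, cleaner than what the paper does. But the gap you flag for singular $Y$ is real---and note that even a cone over a smooth cubic surface is \emph{not} covered by planes (there are only $27$ such planes), so Lemma~\ref{lem:decomp} does not dispose of cones either. The paper's argument is uniform in $Y$: since $\dim B=0$ and $M^3=6$, one may take $W$ to be the blow-up of $X$ at a single point with exceptional $F\cong\PP^2$, and $M^2F=1$ forces $F$ to map isomorphically onto a plane $P\subset Y$. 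Writing a general hyperplane through $P$ as $H=P+S$ and restricting a general quadric through $S$ to $Y$ as $Q=S+S'$, one pulls back to $W$; since neither $q^*S$ nor $q^*S'$ contains $F$, both push down to elements of $|\L|$, hence $q^*S\equiv M+bF$ and $q^*S'\equiv M+cF$. Computing $M^2\cdot q^*S=4$ and $M^2\cdot q^*S'=8$ gives $b=-2$, $c=2$; but then $(q^*S')\cdot\ell=-1$ for a line $\ell\subset F$, forcing $F\subset q^*S'$, a contradiction.

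\textbf{$M^3=4$, $\dim B=1$.} Your one-line sketch (``constrain the base curve'') is far from the mark; this is the longest single case in the paper. Parity of $FM^2$ and Hodge index on a general $T\in|M|$ do give $FM^2=2$ and $LFM=1$, as you suggest, but that is only the setup. One must then isolate the unique component $F_0\subset F$ not contracted by $q$, show that $q_\ast F_0$ is a hyperplane section of the smooth quadric $Y$ with $q|_{F_0}$ generically injective, and carry out a delicate analysis at a general point $x=p(w)$ with $w\in F_0$: on one hand $D_1\cap D_2=p_\ast C+B$ is singular at $x$; on the other hand, by intersecting with $p_\ast q^\ast\ell$ for a general line $\ell\subset Y$ through $q(w)$, and then with $p_\ast q^\ast\ell'$ for a line $\ell'$ in a general hyperplane section, one shows successively that $D_1$ is smooth at $x$ and that $D_1\cap D_2$ is smooth at $x$. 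This contradiction is the heart of the case and requires several careful pushforward computations that your proposal does not anticipate.
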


As before, we use resolution of singularities in dimension three over arbitrary algebraically closed fields \cite[p.\ 1839]{CP}, which ensures the existence of  the diagram
$$\begin{tikzcd}
 & W \arrow[ld, "p"'] \arrow[rd, "q"] &  \\
X \arrow[rr, "\phi", dashed] &  & Y \subset \PP^{4}
\end{tikzcd}$$
where $\phi$ is the rational map given by five general sections of $\L$, and where $L=p^\ast \L=M+F$ for a base point free divisor $M=q^\ast H$ and a divisorial fixed part $F$ of $p^\ast \L$.
By Lemma \ref{lem:dimY=1}, $\dim Y\in \{2,3\}$ and we will deal with these cases separately in the following subsections.

\subsection{$\dim Y=2 $} 
If $\dim B=0$, then the proof of Section \ref{subsec:dimY=2,dimB=0} carries over verbatim, and we get a contradiction.
Hence, $\dim B=1$ by Lemma \ref{lem:B,degY}.

Since $Y\subset \PP^4$ is non-degenerate, it has degree at least $3$ by Lemma \ref{lem:B,degY}.
If $\deg Y=3$, then, as before, $Y$ is a surface of minimal degree and so it is covered by lines, which is impossible by Lemma \ref{lem:decomp}. 
If $\deg Y=4$, then the argument of Section \ref{sect321} applies, and shows that this is impossible. 
We may therefore assume $\deg Y\geq 5$.
We then have
$$
7=L^3=LM^2+LMF
$$
where $LM^2=\deg Y\cdot (\L\cdot p_\ast C)$, where $C$ denotes a general fibre of $q$.
Since $\deg Y\geq 5$, we conclude that $\L\cdot p_\ast C =1$ and $LMF=7-\deg Y \leq 2$.

If $\deg Y=7$, then $LMF=0$ and so Lemma \ref{lem:L^bM^n-b-1F} implies $\dim B=0$, which contradicts our earlier observation that $B$ must be one-dimensional. 
Hence, $\deg Y<7$ and so $LMF>0$.
Since $LMF\leq 2$, the effective curve class $LF$ on $W$ maps to a curve $K\subset Y$ which is either a line or a conic in $\PP^4$. 
In any case, there is a 1-dimensional family of hyperplanes that contain $K$ and hence the residual divisor $K'=H-K$ on $Y$ has a component which moves in a family of dimension at least one. 
Hence, if $K$ moves on the surface $Y$, we get a decomposition of $H$ satisfying Lemma \ref{lem:decomp} and we are done.

Therefore, we can assume that the curve class $q_\ast LF$ does not move and we conclude that any component $F_0$ of $F$ which dominates $Y$ must map to a point on $X$.
Since $\L\cdot p_\ast C=1$, and $FM^2=LM^2=\deg Y$ (as $M^3=0$), there is exactly one such component $F_0$ and we denote its image in $X$ by $x$.
It follows that $\L^2=\deg Y\cdot p_\ast C+B$ has multiplicity at least $\deg Y$ at $x$ and so we get a contradiction as in Sections \ref{subsec:dimY=2,dimB=0} and \ref{subsec:dimY=3,degY=4,dimB=1}. 
Indeed, a general element $S\in |\L|$ will be smooth at $x$, because $S\cdot p_\ast C=1$.
Since $S$ is general, there is another general fibre $C'$ of $q$ such that $p_\ast C'\subset S$ and so $\L\cdot p_\ast C'=1$ implies that a general element of $|\L|$ restricted to $S$ is smooth at $x$; that is, the intersection of two general elements of $|\L|$ must be smooth at $x$, which is a contradiction to the above observation that any such intersection has multiplicity at least $\deg(Y)\geq 2$ at $x$.
This finishes the proof in the case $\dim(Y)=2$.

\subsection{$\dim Y=3$} \label{subsec:septic:dimY=3}
Since $\dim Y=3$, $M^3=\deg q \cdot \deg Y$.
By Lemmas \ref{lem:B,degY} and \ref{lem:Hodgeindex}, we thus get $2\leq M^3\leq 7$.
We will deal with these cases in what follows.

\subsubsection{$M^3\in \{2,3,5,7\}$}
If $M^3=7$, then we conclude via Lemmas \ref{lem:Hodgeindex} and \ref{lem:H^n=d}. 
If $M^3=2,3$ or $5$, then $\deg q=1$ because $M^3=\deg q\cdot \deg Y$ and $\deg Y\geq 2$ (as it is not contained in a hyperplane).
Hence, $q$ is birational and we conclude via Proposition \ref{prop:degq=1}.

\subsubsection{$M^3=6$}
Since $M^3=6$, $\L$ has zero-dimensional base locus by Lemma \ref{lem:Hodgeindex}.
Moreover, $6=M^3=\deg q\cdot \deg Y$ with $\deg Y\geq 2$ and $\deg q\geq 2$ by Proposition \ref{prop:degq=1}. 
Hence $(\deg q,\deg Y)$ is either $(2,3)$ or $(3,2)$.
If $Y$ is a quadric, then (using that $\dim B=0$) we can conclude as in Section \ref{subsec:dimY=3}.
We are therefore left with the first case, that is, $Y$ is a cubic threefold and $\deg(q)=2$.

By Lemma \ref{lem:Hodgeindex}, $\L$ has a single base point with multiplicity one and so we may assume that $W$ is the blow-up of $X$ in a single point with exceptional divisor $F\cong \CP^2$.
In particular, 
$$
7=L^3=M^3+M^2F
$$ 
and so $M^2F=1$, which implies that $F$ maps (isomorphically) onto a plane $P$ in $Y$.
Let $H=P+S$ be a general hyperplane section of $Y$ which contains $P$.
Then $S$ is a quadric surface whose generic point is a smooth point of $Y$.
We can find a pencil of quadric hypersurfaces in $\CP^4$ that contain $S$. 
Let $Q$ be the restriction of a general such quadric to $Y$. 
Then $Q=S+S'$ for a degree four surface $S'$ whose generic point is also a smooth point of $Y$. 
On $W$, we get $2M=q^\ast Q=q^*S+q^*S'$, where $q^\ast S$ denotes the union of components that map to $S$ and $q^\ast S'$ denotes the remaining components. 

Since $Q$ does not contain the plane $P$, $q^\ast S$ and $q^\ast S'$ do not contain $F$ and so no component of $q^\ast S$ or $q^\ast S'$ is contracted by $p_\ast$.
Since $\Pic X$ is generated by $\L$, the divisors $q^\ast S$ and $q^\ast S'$ on $W$ need therefore both map down to elements of $|\L|$ on $X$.
(In particular, since all elements of $|\L|$ are irreducible, $q^\ast S$ and $q^\ast S'$ must be irreducible as well, which justifies our notation.)
Hence, we must have linear equivalences $q^*S=M+bF$ and $q^*S'=M+cF$ for some integers $b$ and $c$.
Since $S$ and $S'$ lie generically in the smooth locus of $Y$ and since a smooth point on $Y$ pulls back to a zero cycle of degree two on $W$, we compute the intersection with $M^2$ as follows:
$$
q^*S\cdot M^2=2\deg(S)=4\ \ \text{and}\ \ q^*S'\cdot M^2=2\deg(S')=8 .
$$
Since $M^3=6$ and $M^2F=1$, we conclude from this that $b=-2$ and $c=2$. 
But then let $\ell\subset F$ be a general line in $F\cong \CP^2$. 
We have  $F\cdot \ell=-1$.
Moreover, $\ell=-F^2$ and so we find
$$
M\cdot \ell=-MF^2=-MLF+M^2F=1 .
$$ 
This implies that 
$$
q^*S'\cdot \ell= (M+2F)\cdot \ell=-1 .
$$ 
That is, $q^*S'$ contains $\ell$, which is absurd, as it implies that $q^\ast S'$ contains $F$ and so $S'$ would need to contain $P$.  
This concludes the case $M^3=6$.

\subsubsection{$M^3=4$}
As $M^3=\deg Y\cdot \deg q$ and $\deg q=1$ is impossible by Proposition \ref{prop:degq=1}, we reduce to the case when $\deg q=\deg Y=2$.
By the argument in Section  \ref{subsec:dimY=3} we may further assume that $Y$ is a smooth quadric threefold.
Hence, the class $H^2$ is divisible by two in $\CH^2(Y)$.
If $\dim(B)=0$, we have $\L^2=p_\ast q^* H^2$ on $X$, and so we conclude that this class is divisible by two as well, which contradicts the condition that $\L^3=7$.
Hence we must have $\dim B>0$ and so
Lemma \ref{lem:B,degY} implies $\dim B=1$.

We have $$7=L^3=LFM+FM^2+M^3.$$
Since $M^3=4$, we find $LFM+FM^2=3$.
Also, as $\dim B=1$, we have $LFM>0$ by Lemma \ref{lem:L^bM^n-b-1F}. 

Let us now show that $LFM=1$ and $FM^2=2$. 
To this end, note that $M^2=2q^\ast \ell$, where $\ell$ denotes the class of a line on the smooth quadric threefold $Y$. In particular, this implies that $FM^2$ is even.
If $FM^2=0$, then let $T\in |M|$ be a general element and let $l,f,m$ be the restrictions of $L,F,M$ to $T$.
Then $f$ is an effective curve class with $mf=0$ and $f^2=lf>0$. 
This contradicts the Hodge index theorem \cite[Theorem 10.1]{KK}, 
 applied to a resolution of singularities of $T$ (which exists as $T$ is a surface), because $f$ is effective and $M|_T$ is base point free and big.
Hence, $FM^2\geq 2$.
As  $LFM+FM^2=3$, this proves the claim.

Let $\ell$ denote a general line on $Y$.
Then $M^2=2q^\ast \ell$.
Since $FM^2=2$, we find that $F \cdot q^\ast \ell =1$.  
Therefore, $F$ contains exactly one component which is not contracted via $q$ and we denote that component by $F_0\subset F$. 
Since $Y$ is a smooth quadric threefold, $FM^2=F_0M^2=2$ implies that $q_\ast F_0$ is a hyperplane section of $Y$, and the morphism $q|_{F_0}: F_0\to Y$ is generically injective.

Let $H_1$ and $H_2$ denote general hyperplane sections of $Y$.
Since $M^2F_0=2$, the curve $C:=q^\ast (H_1H_2)$ meets $F_0$ at two general points $w$ and $w'$ (via $q$ these points are mapped to the intersection of the quadric surface $q_\ast F_0$ with $H_1$ and $H_2$) 
and we let $x:=p(w)\in X$. 
The image of $w$ in $Y$ is a general point of $q(F_0)$.
The divisor $D_i:=p_\ast q^\ast H_i$ is a general element of $|\L|$ and we have
$$
D_1D_2=p_\ast C+B .
$$ 

By construction, $x$ lies on the above 1-cycle and we claim that it is in fact a singular point of $p_\ast C+B$.
Indeed, if $F_0$ maps to a curve in $X$, then it maps to a component of $B$ and so $x$ lies on $B$ and on $p_\ast C$, which proves the claim.
Otherwise, $F_0$ maps to a point on $X$ and so we conclude that $p_\ast C$ must have multiplicity two at $x$, because $F_0 \cdot M^2=2$.
We have thus seen that $x$ is a singular point of the intersection $D_1D_2$ of two general elements of $|\L|$.

We will now show that this is a contradiction.
To this end, recall that $x=p(w)$, where $w$ is a general point on $F_0\subset W$. 
Let $\ell$ be a general line on $Y$ which passes through $q(w)$ (since $q_\ast F_0$ is a hyperplane section of $Y$, and $q(w)\in q_\ast F_0$ is a general point, $\ell$ is in fact a general line on $Y$).

Let $H\in |\mathcal O_Y(1)|$ be a general hyperplane section.
Then $S:=p_\ast q^\ast H$ is a general element of $|\L|$ and so $p^\ast S=q^\ast H+F\in |L|$.
Using the projection formula, we  get
$$
S\cdot p_\ast q^\ast \ell = p_\ast(p^\ast S\cdot q^\ast \ell) =p_\ast((q^\ast H+F)\cdot q^\ast \ell) .
$$
Using this, we conclude further
$$
S\cdot p_\ast q^\ast \ell = p_\ast(q^\ast (H\cdot \ell)) + p_\ast(Fq^\ast \ell)=p_\ast(q^\ast (H\cdot \ell)) + p_\ast w ,
$$ 
because $\ell$ passes through $q(w)$ and so $F q^\ast \ell=w$, where we note that the latter intersection consists of a single reduced point because $F q^\ast \ell=\frac{1}{2}FM^2=1$.
Here, $p_\ast(q^\ast (H\cdot \ell))$ is an effective zero cycle of degree two which has no base point if we move the hyperplane $H$; since $H$ is general, this zero cycle does therefore not contain $x=p(w)$.
That is, $S\cdot p_\ast q^\ast \ell$ has multiplicity one at $x$ and so $S$ must be smooth at $x$. 

Since $Y$ is a smooth quadric, the general hyperplane section $H\in |\mathcal O_Y(1)|$ from above is a smooth quadric surface and so it is covered by lines.
In particular, we find a line $\ell'\subset H$ through the point $q(w)$  
and we note that the $1$-cycle $p_\ast q^\ast \ell'$ lies on $S$.

Let $H'\in |\mathcal O_Y(1)|$ be another general hyperplane section and let $D:=p_\ast q^\ast H'\in |\mathcal O_X(1)|$.
Since $H'$ is general, $p^\ast D=q^\ast H'+F$ and restricting this equality to the surface $\widetilde S=q^\ast H\subset W$ (which is the proper transform of $S$), we find 
$$
p|_{\widetilde S}^\ast (D|_S)=q|_{\widetilde S}^\ast (H'|_H)+F|_{\widetilde S}.
$$
Let us denote by $T=D|_S$ the restriction of $D$ to $S$.
Intersecting this (Cartier) divisor with the $1$-cycle $p_\ast q^\ast \ell'$  on $S$, we thus get
$$
T\cdot p_\ast q^\ast \ell'=p_\ast(p|_{\widetilde S}^\ast (D|_S) \cdot q^\ast \ell')=p_\ast (q|_{\widetilde S}^\ast (H'|_H)\cdot q^\ast \ell'+F|_{\widetilde S}\cdot q^\ast \ell')=p_\ast (q^\ast (H'\cdot \ell'))+p_\ast (F\cdot q^\ast \ell') ,
$$
where in the last equality we used that $q^\ast \ell'$ is a $1$-cycle on $W$ that lies on $\widetilde S$. 
Moreover, since $\ell'$ passes through $q(w)$, find that
$$
p_\ast (F\cdot q^\ast \ell')=p_\ast w=x
$$
Since $H'$ is general, $p_\ast (q^\ast (H'\cdot \ell'))$ is a zero cycle that is supported on general points of $p_\ast q^\ast \ell'$ and so the above computation shows that $T\cdot p_\ast q^\ast \ell'$ contains $x$ with multiplicity one.
That is, $T$ is smooth at $x$.
On the other hand, $T=p_\ast C+B$ by construction and this curve is singular at $x$, as we have shown above.
This contradiction concludes the proof of Theorem \ref{thm:n=3,d=7}.

\section{Proof of Theorem \ref{thm:question1}} \label{sec:question1}  

To apply our results to Question \ref{question}, we will use the following key lemma.

\begin{lemma}\label{lem:question1:reduction}
Let $X$ be a smooth projective variety of dimension $n\geq 3$ over a field $k$.
Assume that $X$ is a smooth specialization of a hypersurface of prime degree $d$ in $\CP^{n+1}$. 
We denote by $X_{\overline k}$ the base change of $X$ to the algebraic closure of $k$.
Then the following holds:
\begin{enumerate} [(a)]
\item 
$X_{\overline k}$ is the smooth specialization of a hypersurface of degree $d$ in $\CP^{n+1}$.
\label{item:lem:question1:reduction:1}
\item 
If $X_{\overline k}$ is isomorphic to a hypersurface of degree $d\geq 2$, then $X$ is isomorphic to a hypersurface of degree $d$. 
\label{item:lem:question1:reduction:2} 
\item  \label{item:lem:question1:reduction:3}
Assume $k=\overline k$.
Then $\Pic X /\sim_{num}=\ZZ [\L]$ for an ample line bundle $\L$ with
$$
\L^n=d,\ \ h^0(X,\L)\geq n+2\ \ \text{and}\ \ K_X=\mathcal O_X(-n-2+d) .
$$
\end{enumerate}
\end{lemma}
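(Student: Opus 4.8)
The three parts are proved by transporting data along the defining family $\pi\colon \X\to\Spec A$ and between $X$ and its base change $X_{\overline k}$. We may assume $A$ is a regular local ring with residue field $k$ and fraction field $K$ (this holds in all cases of interest, e.g.\ when $A$ is a discrete valuation ring, and then $\X$ is regular); write $V:=\X_\eta\subset\CP^{n+1}_K$ for the generic fibre, a smooth hypersurface of degree $d$ of dimension $n\geq 3$. For (a), the plan is to base change $\pi$ along a flat local homomorphism $A\to A'$ with residue field $\overline k$: concretely one may take $A'=\overline k[[t]]$ or $A'=W(\overline k)$ in the equal- and mixed-characteristic cases, and in general the localization of $A\otimes_k\overline k$ at the maximal ideal over $\mathfrak m_A$, whose residue field is $k\otimes_k\overline k=\overline k$ (replacing it by a dominating trait to have a well-defined generic point). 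Flat base change preserves smoothness and properness, the special fibre becomes $X\times_k\overline k=X_{\overline k}$, and the generic fibre becomes $V\times_K\operatorname{Frac}(A')$, still a hypersurface of degree $d$ cut out by the same equation; this gives (a).

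For (c), assume $k=\overline k$. The strategy is that numerical invariants are constant, and $h^0$ is upper semicontinuous, in the flat family $\pi$. Since $V$ is a smooth hypersurface of dimension $n\geq 3$, the Lefschetz hyperplane theorem gives $b_2(\X_{\bar\eta})=1$; smooth and proper base change for $\ell$-adic cohomology ($\ell\neq\operatorname{char}k$) yields $b_2(X)=b_2(\X_{\bar\eta})=1$, so $\rho(X)\leq b_2(X)=1$, and $\rho(X)=1$ since $X$ is projective. Hence $\Pic X/\sim_{num}\,\cong\ZZ$. As $\X$ is regular I extend $\O_V(1)$ to a line bundle $\mathcal L$ on $\X$ and set $\L:=\mathcal L|_X$. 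Constancy of the Hilbert polynomial in the flat family gives $\L^n=\O_V(1)^n=d$, and upper semicontinuity of $h^0$ gives $h^0(X,\L)\geq h^0(V,\O_V(1))=n+2$, the last equality from the sequence $0\to\O_{\CP^{n+1}}(1-d)\to\O_{\CP^{n+1}}(1)\to\O_V(1)\to 0$. For the canonical bundle, adjunction gives $K_V=\O_V(d-n-2)$, so $K_{\X/A}\otimes\mathcal L^{\otimes -(d-n-2)}$ is trivial on $\X_\eta$; since the regular local ring $A$ is a UFD, every vertical divisor on $\X$ is principal and $\Pic\X\hookrightarrow\Pic\X_\eta$ is injective, forcing this bundle to be trivial on $\X$, whence $K_X=K_{\X/A}|_X=\O_X(d-n-2)$. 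Finally, writing $\alpha$ for the ample generator of $\Pic X/\sim_{num}$ and $\L=m\alpha$ there, the relation $m^n\alpha^n=\L^n=d$ forces $m^n\mid d$; as $d$ is prime and $n\geq 3$ this gives $m=1$, while $h^0(X,\L)>0$ forces $m>0$, so $\L=\alpha$ is the ample generator. This proves (c).

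For (b), the line bundle $\L=\mathcal L|_X$ is defined over $k$. Applying the argument of (c) over $\overline k$ (to the family produced in (a)), $\L_{\overline k}$ is the ample generator of $\Pic X_{\overline k}/\sim_{num}$; since $X_{\overline k}$ is a hypersurface of dimension $\geq 3$, one has $\Pic X_{\overline k}=\ZZ\cdot\O_{X_{\overline k}}(1)$ with numerical and linear equivalence coinciding, so $\L_{\overline k}\cong\O_{X_{\overline k}}(1)$. Flat base change gives $h^0(X,\L)=h^0(X_{\overline k},\O(1))=n+2$, and the induced morphism $\phi_{|\L|}\colon X\to\CP^{n+1}_k$ base changes to the embedding of $X_{\overline k}$ as a degree-$d$ hypersurface. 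Since the properties ``closed immersion'' and ``cut out by a single form of degree $d$'' descend along the faithfully flat extension $k\to\overline k$ (the relevant $H^0(\CP^{n+1}_k,I_X(d))$ being one-dimensional by base change), $\phi_{|\L|}$ realizes $X$ as a hypersurface of degree $d$ over $k$, proving (b).

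The main obstacle is part (c): pinning down that the specialized bundle $\L$ is exactly the ample generator, and that $\rho(X)=1$ in arbitrary characteristic. The rank computation needs the full strength of smooth and proper base change in $\ell$-adic cohomology together with the Lefschetz value $b_2(V)=1$, while extending $\O_V(1)$ and identifying the canonical bundle rely on the regularity of $\X$ and the UFD property of $A$; and it is precisely the primality of $d$ that rules out $\L$ being a proper power of the generator.
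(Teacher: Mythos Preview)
Your overall strategy matches the paper's: inflate the base to pass to $\bar k$ for (a); use smooth and proper base change in $\ell$-adic cohomology together with Lefschetz to get $\rho(X)=1$, specialize $\O_V(1)$ to obtain $\L$ on $X$, read off $\L^n=d$, $h^0(X,\L)\ge n+2$ and $K_X=\O_X(d-n-2)$ from constancy and semicontinuity in the family, and invoke primality of $d$ to force $\L$ to be the ample generator; then descend from $\bar k$ to $k$ for (b). Your justification of $K_X=\O_X(d-n-2)$ via the injectivity of $\Pic\X\to\Pic\X_\eta$ is in fact more explicit than the paper's bare assertion of that formula.

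There is, however, a genuine gap: you assume at the outset that $A$ may be taken to be a regular local ring, but the definition of ``smooth specialization'' in the paper allows $A$ to be an arbitrary local ring, and you do not supply a reduction. Your construction of $\L$ (extending $\O_V(1)$ across a regular $\X$) and your identification of $K_X$ (via $A$ being a UFD) both rest on this. Dominating $A$ by a DVR in general only yields residue field a proper extension $k'\supsetneq k$, so one lands on $X_{k'}$ rather than $X$, and a further descent step would be required. The paper instead produces $\O_X(1)$ by invoking the specialization homomorphism on Picard groups through a chain of DVRs, without assuming regularity of the given total space. Likewise, your candidates for $A'$ in (a)---namely $\bar k[[t]]$, $W(\bar k)$, or a localization of $A\otimes_k\bar k$---each presuppose extra structure on $A$ (that it be a DVR, or a $k$-algebra) not guaranteed by the hypotheses; the paper cites Bourbaki's theorem on inflation of local rings, which for \emph{any} local $A$ with residue field $k$ furnishes a flat local $A$-algebra with residue field $\bar k$.
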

\begin{proof}
Let $A$ be a local ring with residue field $\kappa$ and let $\kappa'$ be a field extension of $\kappa$.
By inflation of local rings, there is a local $A$-algebra $B$ with residue field $\kappa'$, see \cite[Appendice, Corollaire du Th\'eor\`eme 1]{bourbaki}.
This immediately implies item (\ref{item:lem:question1:reduction:1}) of the lemma. 

To prove item (\ref{item:lem:question1:reduction:2}), note first that we can choose a sequence of smooth specializations over discrete valuation rings to pass from a smooth hypersurface of degree $d$ to $X$.  
Applying the specialization homomorphism on Picard groups repeatedly, we find that $X$ admits a line bundle $\mathcal O_X(1)$ with $\mathcal O_X(1)^n=d$. 
Assuming that $X_{\overline k}$ is isomorphic to a hypersurface of degree $d$, the Lefschetz hyperplane theorem (see  \cite[Expos\'e XII, Corollaire 3.7]{SGA2}) implies that its Picard group is infinite cyclic and generated by an ample line bundle $\mathcal O_{X_{\overline k}}(1)$ with $\mathcal O_{X_{\overline k}}(1)^n=d$.
Hence, $\mathcal O_{X_{\overline k}}(1)$ must be the base change of $\mathcal O_X(1)$.

By upper semicontinuity, $\mathcal O_X(1)$ has at least $n+2$ sections.
Since $\mathcal O_{X_{\overline k}}(1)$ has exactly $n+2$ sections (as $d\geq 2$), we find a basis of $H^0(X_{\overline k},\mathcal O_{X_{\overline k}}(1))$ which is defined over $k$.
In particular, the embedding $X_{\overline k}\hookrightarrow \CP^{n+1}_{\overline k}$ that is induced by the global sections of $\mathcal O_{X_{\overline k}}(1)$ can be defined over $k$ and so $X$ is isomorphic to a hypersurface of degree $d$ in $\CP^{n+1}_{k}$, as claimed.

To prove item (\ref{item:lem:question1:reduction:3}), 
we use once again the Lefschetz hyperplane theorem, asserting that  
the Picard group of a smooth projective hypersurface (over any field) of dimension $n\geq 3$ is freely generated by the restriction of $\mathcal O(1)$, see \cite[Expos\'e XII, Corollaire 3.7]{SGA2}. 
Since $X$ is the smooth specialization of a smooth hypersurface, we get as in the proof of part (\ref{item:lem:question1:reduction:2}) an ample line bundle $\mathcal O_X(1)\in \Pic X$ as specialization of $\mathcal O(1)$.
In particular, $\mathcal O_X(1)^n=d$, $h^0(X,\mathcal O_X(1))\geq n+2$ by upper semicontinuity and $K_X=\mathcal O_X(-n-2+d)$.
Next recall that $\Pic X/\sim_{num}$ is a finitely generated group (see \cite[p.\ 145, Th\'eor\`eme 2]{Neron}) which is torsion free and so it is a finitely generated free group.
Using the Kummer sequence, we see that the rank of this group is bounded from above by the rank of $H^2_{\text{\'et}}(X,\ZZ_\ell)$ for all primes $\ell\neq p$.  
By the smooth base change theorem for \'etale cohomology \cite[VI.4.2]{milne}, $H^2_{\text{\'et}}(X,\ZZ_\ell)=\ZZ_\ell$ and so we find that $\Pic X/\sim_{num}\cong \ZZ$. 
Since $\mathcal O_X(1)^n=d$ is a prime number, we see that even up to numerical equivalence, $\mathcal O_X(1)$ can not be a non-trivial multiple of another line bundle and so $\Pic X/\sim_{num}=[\L]\ZZ$, as we want.
This concludes the lemma.
\end{proof}

\begin{proof}[Proof of Theorem \ref{thm:question1}]
Let $X$ be a smooth projective variety of dimension $n$ over $k$ which is the smooth specialization of a hypersurface of degree $d$, satisfying assumption (\ref{item:cor:question1:0}), (\ref{item:cor:question1:1}) or (\ref{item:cor:question1:2}) of Theorem \ref{thm:question1}.
By item (\ref{item:lem:question1:reduction:1}) of Lemma \ref{lem:question1:reduction}, $X_{\overline k}$ is the specialization of a hypersurface of degree $d$ and so it satisfies item (\ref{item:lem:question1:reduction:3}) of Lemma \ref{lem:question1:reduction}.
By Theorems \ref{quintictheorem}, \ref{thm:n=3,d=7} and \ref{thm:n=3,d=5}, it follows that $X_{\overline k}$ is isomorphic to a hypersurface of degree $d$.
It thus follows from item (\ref{item:lem:question1:reduction:2}) of Lemma \ref{lem:question1:reduction} that $X$ is isomorphic to a hypersurface of degree $d$ as well.
This concludes the theorem. 
\end{proof}

\section{The families of Horikawa, Griffin and Reid}\label{sec:reid}
It is interesting to compare the proof of Theorem \ref{quintictheorem} with Horikawa's families of quintic surfaces. 
As mentioned in the introduction, Horikawa proved that the moduli space of minimal complex algebraic surfaces with invariants $p_g=4$, $q=0$ and $K_X^2=5$ has two 40-dimensional components $I$ and $II$. 
The component $I$ correspond to quintic surfaces, while for $II$ the canonical system $|K_X|$ has a single base point, and the rational map $\phi_{|K_X|}:X\dashrightarrow \PP^3$ is generically finite of degree 2 onto a quadric $Q$. 
These two components intersect in a 39-dimensional locus corresponding to surfaces of the second type where $Q$ is a quadric cone.
 
Griffin \cite{griffin} wrote down explicit equations for the corresponding degeneration of quintic surfaces. These families where later generalized by Reid \cite{reid}, who gave specializations in any dimension $n\ge 2$. For the sake of completeness, we briefly outline his construction.

Let $\PP=\PP(1^{n+2},2,3^2)$ be weighted projective space with homogeneous coordinates $x_0,\ldots,x_{n+1}$ (of degree 1), $y_2$ (of degree 2) and $z_1,z_2$ (of degree 3). Consider the family $\X\subset \PP \times \AA_t^1$ defined by the $4\times 4$ Pfaffians of the skew-symmetric matrix
$$M_t=\begin{pmatrix}
0 & t & y_1 & x_1 & x_2 & z_1 \\
-t& 0 & y_2 & x_2 & x_3 & z_2 \\
-y_1& -y_2 & 0 & z_1 & z_2 & a \\
-x_1 & -x_2 & -z_1 & 0 & tb & b y_1 \\  
-x_2 & -x_3&  -z_2&  -tb&  0 & b y_2 \\   
-z_1  & -z_2&  -a&  -by_1& -by_2  & 0 \\   
\end{pmatrix}
$$where $a=A_5(x_0,\ldots, x_{n+1},y_2)$, $b=B_2(x_0,\ldots,x_{n+1},y_2)$ and $y_1=Y_{1,2}(x_0,\ldots, x_{n+1},y_2)$ are general forms of degree 5, degree 2 and bidegree (1,2) in the $x_i,y_2$ respectively. Then $f:\X\to \AA_t^1$ is a flat family of $n$-folds with $K_{\X | \AA^1}=\O_{\X}(n-3)$. Moreover, for $t\neq 0$, $\X_t$ is isomorphic to a quintic hypersurface, whereas the special fiber $X=\X_0$ is a non-hypersurface, as $|\L|$ is not base point free. In fact, the base locus consists of a single point $x\in X$ in the smooth locus of $X$.  
If $W$ is the blow-up of $X$ at $x$,  the morphism $q: W\to \PP^{n+1}$ is generically a double cover of the rank 3 quadric $x_1x_3=x_2^2$ (compare this with Section \ref{subsec:dimY=3}).

According to \cite{reid}, $X$ has terminal singularities with ordinary double points in dimension $\le n-3$. 
In particular, $X$ is a smooth surface of general type when $n=2$, and a Calabi-Yau threefold with finitely many double points when $n=3$. 

In our proof of Theorem \ref{quintictheorem}, these examples are ruled out in Section \ref{subsec:dimY=3}, via the use of Lemma \ref{lem:decomp}.
That is, we rely on the fact that if $X$ is smooth, then each Weil divisor is Cartier and (at least up to numerical equivalence) a multiple of $\mathcal O_X(1)$, which must fail for the singular examples constructed by Reid above.

\bibliographystyle{plain}

{
}

%
%

{Department of Mathematics, University of Oslo, Box 1053, Blindern, 0316 Oslo, Norway}

{{\it Email:} \verb"johnco@math.uio.no"}

\smallskip

{Mathematisches Institut, Ludwig--Maximilians--Universit\"at M\"unchen, Theresienstr.\ 39, 80333 M\"unchen, Germany}

{{\it Email:} \verb"schreieder@math.lmu.de"}

\end{document}